\xpatchcmd{\paragraph}{\normalfont}{{\normalfont\bfseries}}{}{}
\newcommand{\myspace}{\qquad\qquad\qquad}
\newcommand{\cA}{{\mathcal A}}
\newcommand{\cB}{{\mathcal B}}
\newcommand{\cD}{{\mathcal D}}
\newcommand{\cH}{{\mathcal H}}
\newcommand{\cL}{{\mathcal L}}
\newcommand{\cO}{{\mathcal O}}
\newcommand{\cR}{{\mathcal R}}
\newtheorem{theorem}{Theorem}[section]
\newtheorem{proposition}[theorem]{Proposition}
\newtheorem{remark}[theorem]{Remark}
\newtheorem{remarks}[theorem]{Remarks}
\newtheorem{definition}[theorem]{Definition}
\newtheorem{problem}[theorem]{Problem}
\numberwithin{equation}{section}
\date{}
\begin{document}

\title{Improved boundary regularity for a Stokes-Lam\'e system}




\author{Francesca Bucci}
\address{Francesca Bucci, Universit\`a degli Studi di Firenze,
Dipartimento di Matematica e Informatica,
Via S.~Marta 3, 50139 Firenze, ITALY
}
\email{francesca.bucci@unifi.it}

\begin{abstract}
This paper recalls a partial differential equations system, which is the linearization of a recognized
fluid-elasticity interaction three-dimensional model. A collection of regularity results for the traces of
the fluid variable on the interface between the body and the fluid is established, in the case a suitable boundary dissipation is present. These regularity estimates -- in time and space, of local and global nature -- are geared toward ensuring the well-posedness of the algebraic Riccati equations which arise from
the associated optimal boundary control problems on an infinite time horizon. The theory of operator semigroups and interpolation provide the main tools.
\end{abstract}

\maketitle


\section{Introduction}
We consider a variant of a recognized partial differential equations (PDE) system
that arises in the modeling of the interactions of an elastic body fully immersed
in a fluid.
A mathematical description of the linear PDE problem under consideration, that is
\eqref{e:dampedfsi-free} below with $a_2>0$, is given in Section~\ref{ss:intro_main}.
The dynamics of the fluid and the solid are described by the equations of Stokes flow in the variables $u$ (fluid velocity field) and $p$ (pressure), and by a system of linear elasticity in the variable $w$, respectively. 
In both the original fluid-structure interaction (FSI) problem -- namely, \eqref{e:dampedfsi-free} with 
$a_2=0$ 
-- and the present one, the interface $\Gamma_s$ between the fluid and the solid is assumed to be fixed.
This characteristic finds a physical justification 
in the fact that the motion of the solid is considered as entirely due to infinitesimal displacements (fast, though).
Instead, the two PDE systems differ inasmuch as the absence/presence of
a boundary dissipation term ($a_2=0$ {\em vs} $a_2>0$) changes -- from strong to uniform -- the stability of the corresponding dynamics,
as proved by Avalos and Triggiani \cite{avalos-trig-dampedfsi_2009}.
Another favourable consequence of the change in the transmission boundary condition is the improved regularity of the normal component of the elastic stress tensor on the interface.

Our aim in this paper is to pinpoint the resulting regularity of the boundary traces of the fluid variable.
The obtained results, collectively stated as Theorem~\ref{t:main} in Section~\ref{ss:intro_main} below,
are central to solvability of the associated optimal control problems with quadratic functionals; see Section~\ref{s:application}, in particular Remark~\ref{r:pde-theoretic} and the subsequent verification.
To be more specific, Theorem~\ref{t:main} attests the applicability of the Linear-Quadratic (LQ) theory on an infinite time horizon devised by Acquistapace, Lasiecka and this author in \cite{abl_2013}.
This theory, tailored to coupled PDE systems which comprise both hyperbolic and parabolic components, with the latter subject to boundary/interface control,
finds in \eqref{e:dampedfsi-free} a significant FSI illustration, besides and beyond relevant physical interactions such as mechanical-thermal (and acoustic-structure) ones, which provided its original motivation.

While the former (undamped) FSI occurs already in the 1969 monograph \cite{lions_69} by J.-L.~Lions, it was brought to wider attention much later by Du~{\em et al.} \cite{du-etal_2003}.
The proof of its well-posedness in a natural functional setting has been 
given by Barbu~{\em et al.}~\cite{barbu-etal_2007}.
Intrinsic features of the coupled PDE problem are 
(i) the Neumann-type boundary condition (involving the pressure as well) which hinders the
Leray projection to divergence-free spaces, and also 
(ii) the transmission condition on the interface.
The latter raises a major technical issue, that is the apparent discrepancy
between the trace regularity of the variable that describes the fluid flow and the displacement of the elastic solid.
First in the mathematical literature, the study \cite{barbu-etal_2007} -- which pertains to the actual {\em nonlinear} PDE problem, comprising the Navier-Stokes equations -- shows the existence of finite energy weak solutions
whose definition incorporates the exceptional boundary regularity of the elastic variable  
on the interface, thus solving an open problem until then.

With the focus on optimal control problems with quadratic functionals
associated with the linearized Stokes-Lam\'e system \eqref{e:dampedfsi-free}
(still with $a_2=0$), a set of trace regularity results for the fluid variables $u$ and $u_t$ have been established jointly with Lasiecka \cite{bucci-las-fsi_2010,bucci-las-fsi_2011}.
The article \cite{bucci-las-fsi_2011} provides a slightly more general treatment.
In addition, it gives the explicit proof of a preliminary result (Lemma~2.4 therein)
that pinpoints the regularity of the normal stress $\sigma(w)\cdot \nu$ on the interface
(more specifically of a component of it), when $(u,w,w_t)$ solves the {\em coupled} PDE problem 
\eqref{e:dampedfsi-free} with initial data in a scale of spaces $Y_\alpha$, 
$\alpha \in [0,1]$, and $u|_{\Gamma_s}\in H^\alpha((0,T)\times \Gamma_s)$.
The proof of the aforesaid Lemma~2.4 combines the methods of microlocal analysis utilized by Barbu~{\em et al.} in \cite[Theorem~3.3]{barbu-etal_2007} with interpolation, as done previously by Lasiecka and Tuffaha
in \cite{las-tuff-fsi_2009}.
In addition, it utilizes a well-known trace result for the solutions to the Cauchy-Dirichlet problem for wave equations 
with initial data in $H^1\times L^2$ ({\em cf.}~\cite{las-lions-trig_1986}).
(An improved `hidden' regularity result for the {\em uncoupled} Lam\'e system has been derived in 
\cite{raymond-fsi_2014} for the case of moving interface, flat though.)
\\
The boundary regularity estimates collected in \cite[Theorem~2.10]{bucci-las-fsi_2010} -- interesting by themselves -- are central to the invocation of the theory of the LQ-problem on a finite time horizon initiated by Acquistapace, Lasiecka and this author in \cite{abl_2005}.
Since the dynamics of the original FSI is not uniformly stable, the optimal 
boundary control problem on an infinite time interval had remained outside the investigation.

In this paper we devote our attention to the FSI \eqref{e:dampedfsi-free} with $a_2>0$.
Given the change in the transmission condition which renders the overall dynamics uniformly stable
({\em cf.}~\cite{avalos-trig-dampedfsi_2009}),
the theory of the infinite time horizon LQ-problem developed by Acquistapace~{\em et al.} in \cite{abl_2013}
is potentially suited to be used.
The requirement that the uncontrolled PDE system is (uniformly) exponentially stable -- a prerequisite therein -- is fulfilled.
We work within a semigroup framework which is largely consistent with the one in \cite{bucci-las-fsi_2010,bucci-las-fsi_2011}.
The coupled PDE system, subject to a control force acting on the interface, is reformulated as an abstract system $y'=\cA y+\cB g$ in a suitable Banach space $Y$, with the free dynamics generator $\cA$ and control operator $\cB$ defined in Proposition~\ref{p:fsi-well-posed}.
The regularity estimates (in time and space, of local and global nature, and of independent interest) attained
in Theorem~\ref{t:main}, once interpreted in functional-analytic terms -- i.e. as regularity properties of the adjoint of the unbounded
operator $e^{t\cA}\cB$ --, allow for the application of the LQ-theory of \cite{abl_2013}.
The implications of the (PDE) trace regularity results in well-posedness of the associated (operator)
Riccati equations will be highlighted in Section~\ref{s:application}.
It is important to emphasize that the obtained regularity results do not hinge upon smoothing observations.
This means that we allow fairly general functionals, including the integral of the full quadratic energy of the physical system; see \eqref{e:energy-cost} in Section~\ref{s:proofs}.
(For the undamped case, this advance on the earlier study \cite{las-tuff-fsi_2009} was first achieved in 
\cite{bucci-las-fsi_2010}.)

On the mathematical side, we note that not only the obtained regularity estimates 
-- that constitute the core of this work -- are improved over Sobolev trace regularity. 
Also, the one pertaining to the fluid acceleration (i.e.~\eqref{e:u_t-better-reg} of Theorem~\ref{t:main})
is enhanced, when compared to the respective one sought and established in 
\cite[Theorem~2.10]{bucci-las-fsi_2010}.
In addition, the proofs can be made somewhat simpler.
We recall that in the case of the original FSI microlocal analysis arguments were necessitated to disclose the sharp regularity results pertaining to the elastic (hyperbolic) component; these results were crucially utilized to single out the trace results for the fluid (parabolic) component.
In the present case, as a result of the presence of the boundary dissipation term,
the theories of operator semigroups and of interpolation spaces alone will suffice.


\subsection{Some bibliographical comments} 
As a complement to the first part of the Introduction, we furnish some (non-exhaustive) bibliographical notes.
The topic broadly referred to as FSI is a prominent section of the research and application area known as multiphysics.
The questions that FSI raise, and the challenges to be faced, span both the modeling and the mathematical analysis.
Because of the complexity of the PDE systems describing FSI, discretization techniques assume a fundamental role.
It goes far beyond the scopes of the present work to provide a comprehensive 
overview of the many contributions to the research advances within this subject;
a minimum information follows.
A majority of the modeling and numerical works focus on aeroelastic and hemodynamic problems,
consistently with the clear engineering and medical (cardiovascular) applications, respectively;
{\em cf.}~Quarteroni~{\em et al.} \cite{quarteroni-etal_2000}, Wick and Richter \cite{richter-wick_2013,richter-book_2017}, along with their references.
Contributions to the understanding and mathematical analysis of nonlinear FSI arising in aeroelasticity, with major focus on the well-posedness of the PDE models (besides the modeling itself)
and the long-time behaviour of the flow-plate dynamics (in particular, attractors), were given by Chueshov\footnote{Igor Chueshov, passed away in 2016, will be sadly missed and forever remembered.}.
We mention explicitly his joint work with Ryzkhova \cite{chu-ryzhk_2013}, and lastly with 
Lasiecka and Webster; the reader is referred to the review article \cite{chueshov-etal-review_2016} and its references.
%
See also the recent (and very nice) chapter devoted to the mathematical theory of evolutionary fluid-flow structure interactions 
\cite{las-webst-oberwolfach_2018}.

Among the former studies of the motion of rigid bodies immersed in a fluid, we recall the ones
by San~Mart\'in~{\em et al.} \cite{sanmartin-etal_2002} in 2D and by Feireisl \cite{feireisl_2003}
in 3D.
Most FSI are free boundary problems.
The local existence and uniqueness (and regularity) for a PDE system coupling the incompressible Navier-Stokes equations with the linear Kirchhoff material model for the description of an elastic solid immersed in a fluid, with {\em moving} interface, was first established by Coutand and Shkoller \cite{coutand_shkoller_2005,coutand_shkoller_2006}.
Subsequent progress in the study of the well-posedness (within this type of scenario) has been carried on by Kukavica and Tuffaha \cite{kuk-tuff_2012}, Raymond and Vanninathan \cite{raymond-fsi_2014} and more recently by Ignatova~{\em et al.} \cite{ignatova_etal_2017}; the reader is referred to \cite{ignatova_etal_2017}
and its references.
Advances in both the modeling and well-posedness of FSI
are pursued by Canic and Muha (jointly as well as independently, and with different coauthors); see, e.g., 
\cite{muha-canic_2016}.
These include fluid-rigid body interactions that are described by the coupling of
the Navier-Stokes equations with a system of ODE.  
A variety of scenarios for FSI are explored as well by Ne\v{c}asov\'a and coauthors, 
with novel choices and solutions devised; see e.g. \cite{chemetov-etal_2019}.

Moving on to optimal control problems, we recall the work \cite{las-tuff-fsi_2009} by Lasiecka and Tuffaha, where it is first showed that for the control system $y'=\cA y+\cB g$ representing the FSI \eqref{e:dampedfsi-free} (with $a_2=a_1=0$), subject to a boundary force $g$, 
a {\em singular estimate} for the norm of $\cO e^{t\cA}\cB$ is valid, $\cO$ being a suitable {\em observation} operator.
This property brings about the sought {\em feedback} form of the optimal control, provided the minimization pertains to quadratic cost functionals in a restricted class.
While the analysis in \cite{las-tuff-fsi_2009} includes the Bolza problem (i.e. allows a penalization of the state at a final time $T>0$),
it excludes the integral of the {\em full} quadratic energy of the system.

The recent paper by Bociu~{\em et al.} \cite{bociu-etal-fsicontrol_2020} deals with an optimal control
problem for a fluid-elasticity interaction with moving boundary,
in the presence of a distributed control action (in feedback form).
The existence of an optimal control is proved when the minimization specifically pertains to the vorticity inside the fluid.     
The reader is referred to \cite{bociu-etal-fsicontrol_2020} for an insight into the relevant technical issues that arise in the analysis and optimization of FSI with moving interface, as well as for a more complete
literature review (including FSI comprising {\em compressible} fluids, left out of the discussion here
for the sake of space).
To the author's knowledge, a study of the optimal control problem (with general -- quadratic or possibly
non-quadratic -- cost functionals) for the PDE system studied in \cite{barbu-etal_2007} which couples the incompressible Navier-Stokes equations with the Lam\'e system (when subject to boundary actions), thereby deriving necessary conditions for an optimal {\em open-loop} control to exist, seems still lacking.


\subsection{The PDE problem. Main result} \label{ss:intro_main}
Let us introduce the notation and then the coupled PDE system. 
The open, bounded and smooth domain representing the fluid-solid region is denoted
by $\Omega\subset \mathbb{R}^d$, $d=2,3$;
with $\Omega_f$ and $\Omega_s$ the (open and smooth) domains occupied by the 
fluid and the solid, respectively, then $\Omega$ is the interior of 
$\overline{\Omega}_f\cup\overline{\Omega}_s$.
The interactions occurr at an interface, say, $\Gamma_s=\partial\Omega_s$, which
is assumed stationary.
Finally, $\Gamma_f$ is the outer boundary of $\Omega_f$, namely 
$\Gamma_f=\partial\Omega_f\setminus \partial\Omega_s$.
Next, the velocity field of the fluid is represented by a vector-valued function $u$, which satisfies the
equations of Stokes flow in $\Omega_f$; the scalar function $p$ represents, as usual, the pressure.
The displacements of the solid region $\Omega_s$ are described by the variable $w$,
which satisfies the Lam\'e system of dynamic elasticity.
Thus, the PDE system in the unknown $(u,w)$ is as follows,
\begin{equation} \label{e:dampedfsi-free}
\begin{cases} 
u_t-{\rm div}\,\epsilon(u)+\nabla p= 0 & \textrm{in }\; Q_f:= (0,T)\times \Omega_f
\\
{\rm div}\, u=0 & \textrm{in }\; Q_f
\\ 
w_{tt} - {\rm div}\,\sigma(w)+a_1 w=0 &\textrm{in }\; Q_s:= (0,T)\times\Omega_s
\\
u=0 & \textrm{on }\; \Sigma_f:= (0,T)\times\Gamma_f
\\
\epsilon(u)\cdot\nu=\sigma(w)\cdot\nu + p\nu & \textrm{on }\; \Sigma_s:= (0,T)\times\Gamma_s
\\
w_t-a_2 \sigma(w)\cdot\nu=u & \textrm{on }\; \Sigma_s
\\
u(0,\cdot)=u_0 & \textrm{in }\; \Omega_f
\\
w(0,\cdot)=w_0\,, \quad w_t(0,\cdot)=w_1 & \textrm{in }\; \Omega_s\,.
\end{cases}
\end{equation}
where $\sigma$ and $\epsilon$ denote the elastic stress tensor and the strain tensor, respectively, that are
\begin{equation}
\epsilon_{ij}(w)= \frac12\Big(\frac{\partial w_i}{\partial x_j}
+\frac{\partial w_j}{\partial x_i}\Big)\,,
\qquad
\sigma_{ij}(w) = \lambda \sum_{k=1}^3 \epsilon_{kk}(w)\delta_{ij}
+ 2\mu \epsilon_{ij}(w)\,,
\end{equation}
where $\lambda, \mu$ are the Lam\'e constants and $\delta_{ij}$ is the Kronecker
symbol.
We note that $\nu=\nu(x)$ is the outward unit normal for the fluid region $\Omega_f$; accordingly, it is
pointing towards the interior of the solid region $\Omega_s$.  

\smallskip
The functional setup for the FSI \eqref{e:dampedfsi-free} is consistent -- in its
basic elements -- with the one of \cite{barbu-etal_2007} (and \cite{bucci-las-fsi_2010,bucci-las-fsi_2011}).
For the fluid component of the system one first introduces the space 
\begin{equation*}
\cH := \big\{ u\in [L^2(\Omega_f)]^d\colon {\rm div}\, u=0\,, \; 
u\cdot \nu\big|_{\Gamma_f}=0\big\}\,.
\end{equation*} 
With $[H^1(\Omega_s)]^d\times [L^2(\Omega_s)]^d$ as the natural energy space for
the Lam\'e system, the state (energy) space is then 
\begin{equation*}
Y := \cH\times [H^1(\Omega_s)]^d\times [L^2(\Omega_s)]^d\,.
\end{equation*}
Next, we denote by $V$ the space defined as follows:
\begin{equation*}
V := \big\{ v\in [H^1(\Omega_f)]^d \colon {\rm div}\,u=0\,, \;
u|_{\Gamma_f}=0\big\}\,.
\end{equation*}
We follow the usual notation
\begin{equation*}
(u,v)_f := \int_{\Omega_f}uv \,{\rm d}\Omega_f\,,\quad 
(u,v)_s := \int_{\Omega_s}uv \,{\rm d}\Omega_s\,,\quad
\langle u,v\rangle := \int_{\Gamma_s} uv \,{\rm d}\Gamma_s
\end{equation*}
for the various inner products.
The space $V$ is topologized with respect to the inner product given by 
\begin{equation*}
(u,v)_{1,f} := \int_{\Omega_f}\epsilon(u)\epsilon(v) d\Omega_f\,;
\end{equation*}
the corresponding (induced) norm $|\cdot|_{1,f}$ is equivalent to the 
usual $H^1(\Omega_f)$ norm, in view of Korn inequality and the Poincar\'e inequality.

\begin{remark}
{\rm 
In order to simplify the notation, we will write all the Sobolev spaces $[H^s(E)]^d$
(related to either function $u$, $w$) as $H^s(E)$.
The norm $\|\cdot\|_{H^r(D)}$ in the Sobolev space $H^r(D)$ will be shortly denoted by $|\cdot|_{r,D}$ throughout the paper; $|\cdot|_{0,D}$ will be semplified to $|\cdot|_D$.
}
\end{remark}

The energy of the system at time $t$ is (possibly neglecting a factor $\frac{1}{2}$) 
\begin{equation*}
|u(t)|_{\Omega_f}^2+(\sigma(t)),\epsilon(w(t))_s  + |w(t)|_{\Omega_s}^2 +|w_t(t)|_{\Omega_s}^2\,,
\end{equation*}
after setting $u(t)=u(t,\cdot)$, $w(t)=w(t,\cdot)$, etc. 

\bigskip
We will prove a series of trace regularity results for the FSI \eqref{e:dampedfsi-free}
with $a_2>0$, whose respective statements are detailed in the following Theorem.


\begin{theorem}\label{t:main}
Given the (uncontrolled) PDE system \eqref{e:dampedfsi-free} with $a_1,a_2>0$, 
let $y(t)=(u(t),w(t),w_t(t))$ be the semigroup solution corresponding to an 
initial state $y_0=(u_0,w_0,w_1)$, in accordance with part (b) of 
Proposition~\ref{p:fsi-well-posed}. 
The following assertions pertain to the regularity of the boundary traces 
of the fluid velocity field $u$ and acceleration $u_t$ on the interface $\Gamma_s$.

\smallskip
\noindent
{\bf \em i)}
Assume $y_0\in Y$.
Then $u$ can be represented as 
\begin{equation*}
u(t) = u_1(t)+u_2(t)\,,
\end{equation*}
where the first component satisfies the (pointwise in time) estimate
\begin{equation} \label{e:se-damped}
|u_1(t)|_{\Gamma_s} \le 
\frac{C\,e^{-\eta t}}{t^{1/4+\delta}}\|y_0\|_Y \qquad\quad  \forall y_0\in Y\,, \; 
\forall t>0
\end{equation}
for some positive constants $C, \eta$ and any positive $\delta<\frac{3}{4}$, 
while for any $T>0$
\begin{equation} \label{e:u_2-reg}
u_2\big|_{\Gamma_s}\in L^p(0,T;L^2(\Gamma_s)) \qquad \forall p\,, \; 1\le p<+\infty\,.
\end{equation}
In particular, one has
\begin{equation*}
u\big|_{\Gamma_s}\in L^{4-\sigma}(0,T;L^2(\Gamma_s)) + L^p(0,T;L^2(\Gamma_s)) 
\qquad \forall p\,, \; 1\le p<+\infty\,,
\end{equation*}
for arbitrarily small $\sigma$.

\smallskip
\noindent
{\bf \em ii)}
Assume $y_0\in \cD({\cA}^{\epsilon})$ for some $\epsilon\in (0,1)$.
Then, the regularity result in \eqref{e:u_2-reg} improves as follows:
\begin{equation} \label{e:u_2-reg-improved}
u_2\big|_{\Gamma_s}\in 
H^{1+\epsilon/2,1/2+\epsilon/4}(\Sigma_s)\,,
\end{equation}
which gives in particular
\begin{equation} \label{e:u_2-suffices}
u_2\big|_{\Gamma_s}\in H^{1/2+\epsilon/4}(0,T;L^2(\Gamma_s))
\subset C([0,T],L^2(\Gamma_s))\,.
\end{equation}

\smallskip
\noindent
{\bf \em iii)}
Let now $y_0\in \cD({\cA}^{1-\theta})$, with $\theta\in (0,\frac12)$
(and arbitrary $T>0$, just like in i)).
Then, the following trace regularity result pertains to the fluid
acceleration:
\begin{equation} \label{e:u_t-better-reg}
u_t\big|_{\Gamma_s}\in L^q(0,T;H^{1/2-\theta-\delta}(\Gamma_s)) 
\quad \text{for any $q< \frac{2}{2-\delta}$},
\end{equation}
continuously with respect to $y_0$; a fortiori,
\begin{equation} \label{e:u_t-reg}
u_t\big|_{\Gamma_s} \in L^q(0,T;L^2(\Gamma_s))
\quad \text{for any $q< \frac{4}{3+2\theta}$.}
\end{equation}

\end{theorem}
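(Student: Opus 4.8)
The plan is to read off all three statements from the semigroup representation $y(t)=e^{t\cA}y_0=(u(t),w(t),w_t(t))$, exploiting two features granted by the boundary dissipation $a_2>0$: the exponential stability $\|e^{t\cA}\|_{\mathcal L(Y)}\le Me^{-\eta t}$ recorded in Proposition~\ref{p:fsi-well-posed}, and the analytic (parabolic) smoothing carried by the \emph{fluid} component of the dynamics. Since the full generator $\cA$ is not analytic (the Lam\'e part stays hyperbolic), I would not differentiate $e^{t\cA}$ naively; instead I would read $u$ off the fluid equation written as a forced Neumann--Stokes realization $u_t=\cA_S u+\cN[\sigma(w)\cdot\nu]$, where $\cA_S$ is the (analytic) Stokes generator on $\cH$, the pressure being recovered elliptically and absorbed into this realization, and $\cN$ lifts the interface traction supplied through the transmission conditions on $\Sigma_s$. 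Everything then reduces to fractional-power bounds $\|\cA_S^{\alpha}e^{t\cA_S}\|\le C t^{-\alpha}$ composed with the trace map $H^{1/2+2\delta}(\Omega_f)\to L^2(\Gamma_s)$, together with the improved interface regularity of $\sigma(w)\cdot\nu$.

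For part i) I would use variation of parameters to split $u(t)=e^{t\cA_S}u_0+\int_0^t e^{(t-s)\cA_S}\cN[\sigma(w(s))\cdot\nu]\,ds=:u_1(t)+u_2(t)$. The singular estimate \eqref{e:se-damped} is then the analytic bound $\|\cA_S^{1/4+\delta}e^{t\cA_S}u_0\|_{\cH}\le Ct^{-1/4-\delta}|u_0|_{\Omega_f}$ followed by the trace inequality into $H^{2\delta}(\Gamma_s)\subset L^2(\Gamma_s)$, the exponential weight being inherited from the decay of $e^{t\cA_S}$; the restriction $\delta<\tfrac34$ keeps the power $\tfrac14+\delta$ below $1$. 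The membership \eqref{e:u_2-reg} rests on the key input that, with $a_2>0$, the dissipation term $a_2\,\sigma(w)\cdot\nu$ in the transmission condition produces, via the energy identity alone, square-integrable control $\int_0^T|\sigma(w)\cdot\nu|_{\Gamma_s}^2\,dt\le C\|y_0\|_Y^2$, i.e.\ $\sigma(w)\cdot\nu\in L^2(\Sigma_s)$ for $y_0\in Y$; inserting this into the smoothing convolution produces a borderline $t^{-1/2}$ kernel, so that Young's inequality yields $u_2\big|_{\Gamma_s}\in L^p(0,T;L^2(\Gamma_s))$ for every finite $p$ but not $p=\infty$.

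For parts ii) and iii) I would upgrade the data to $\cD(\cA^{\epsilon})$, respectively $\cD(\cA^{1-\theta})$, and interpolate. In ii) the $\epsilon$-gain removes the borderline $t^{-1/2}$ obstruction, so that the maximal parabolic trace regularity of the forced Stokes system (time order equal to half the space order) promotes $u_2$ to $H^{1+\epsilon/2,\,1/2+\epsilon/4}(\Sigma_s)$, the embedding \eqref{e:u_2-suffices} into $C([0,T];L^2(\Gamma_s))$ then being the one-dimensional-in-time Sobolev embedding $H^{1/2+\epsilon/4}(0,T)\hookrightarrow C[0,T]$. In iii) I would instead apply the parabolic smoothing to $u_t$, spending the extra smoothness of $y_0\in\cD(\cA^{1-\theta})$ entirely on the spatial index: composing the fluid smoothing with the trace lands $u_t(t)$ in $H^{1/2-\theta-\delta}(\Gamma_s)$ at the cost of a singular weight $t^{-(2-\delta)/2}$, which is $L^q(0,T)$-integrable precisely for $q<\tfrac{2}{2-\delta}$, giving \eqref{e:u_t-better-reg}. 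The a fortiori statement \eqref{e:u_t-reg} follows by trading the surplus spatial smoothness $\tfrac12-\theta-\delta$ for a milder singularity according to the parabolic scaling (half a time derivative per spatial derivative): lowering the target to $L^2(\Gamma_s)$ reduces the weight to $t^{-(3+2\theta)/4}$ and raises the exponent to $q<\tfrac{4}{3+2\theta}$.

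The main obstacle, and the place where $a_2>0$ is indispensable, is the improved traction regularity: establishing that $\sigma(w)\cdot\nu$ gains the stated space--time regularity on $\Sigma_s$ purely from the dissipation, the semigroup and interpolation, bypassing the microlocal analysis that was unavoidable in the undamped case. Two further delicate points I anticipate are (1) verifying that the parabolic $t^{-\alpha}$ smoothing of the fluid component genuinely survives the hyperbolic coupling over the whole relevant range of $\alpha$, so that the convolution against $\cN[\sigma(w)\cdot\nu]$ is legitimate; and (2) tracking the anisotropic space--time exponents through both the lifting $\cN$ and the trace map so that the borderline indices ($p<\infty$ in \eqref{e:u_2-reg}, and the sharp $q<\tfrac{2}{2-\delta}$ and $q<\tfrac{4}{3+2\theta}$ in \eqref{e:u_t-better-reg}--\eqref{e:u_t-reg}) come out exactly as stated.
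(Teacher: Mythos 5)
Your parts i) and ii) track the paper's own proof closely: the same Duhamel splitting $u(t)=e^{tA}u_0-A\int_0^t e^{(t-s)A}N\sigma(w(s))\cdot\nu\,ds$, the trace read off as $-N^*A$, the boundedness of $A^{3/4-\delta}N$, the dissipation identity giving $\sigma(w)\cdot\nu\in L^2(0,T;L^2(\Gamma_s))$ for $Y$ data, Young's inequality against the just-barely-not-$L^2$ kernel $A^{1/2+2\delta}e^{As}$, and, for ii), the interpolated stress regularity of Proposition~\ref{p:damped-traces} fed into optimal parabolic regularity. That part of the plan is sound and is essentially the argument in the paper.

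Part iii), however, has a genuine gap. Writing $u_t=Ae^{At}u_0+\tfrac{d}{dt}\int_0^te^{(t-s)A}AN\sigma(w(s))\cdot\nu\,ds=:V_1+V_2$, your singular-weight computation (weight $t^{-(2-\delta)/2}$, hence $q<\tfrac{2}{2-\delta}$) accounts only for $V_1$: it is exactly the bound on $A^{1-\sigma}e^{At}\bigl[A^{(1-\theta)/2}u_0\bigr]$ with $q(1-\sigma)<1$. You do not say how $V_2$ is controlled, and that is the hard term: for $y_0\in Y$ it contains the pointwise-in-time summand $AN\,\sigma(w(t))\cdot\nu$, which lives only in the \emph{negative} space $[\cD(A^{1/4+\delta_1})]'$, so ``parabolic smoothing applied to $u_t$'' does not produce a boundary trace of $V_2$ at all at that level. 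The paper's resolution is a two-endpoint argument: for $y_0\in\cD(\cA)$ one integrates by parts in time, $V_2=e^{At}AN\sigma(w(0))\cdot\nu+\int_0^te^{(t-s)A}AN\sigma(w_t(s))\cdot\nu\,ds$, which is legitimate precisely because \eqref{e:improved-reg-stress} supplies $\sigma(w(0))\cdot\nu\in H^{1/2}(\Gamma_s)$ and $\sigma(w_t)\cdot\nu\in L^2(0,T;L^2(\Gamma_s))$; this places $V_2$ in $L^{q_2}(0,T;H^{3/2-2\delta_2}(\Omega_f))$. One then interpolates between this and the negative-order estimate valid for $y_0\in Y$ (Lions--Magenes, Theorem~12.5, which handles the dual Sobolev space) to reach $y_0\in\cD(\cA^{1-\theta})$, and only afterwards takes traces; this is where the constraint $\theta+\delta\le\tfrac12$ and the exponents in \eqref{e:u_t-better-reg} come from. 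Without the integration by parts (which requires the $H^1$-in-time regularity of the normal stress from Proposition~\ref{p:damped-traces}) and the interpolation between the two representations of $V_2$, the stated exponents cannot be derived. The final passage from \eqref{e:u_t-better-reg} to \eqref{e:u_t-reg} is, as you indicate, just the choice $\delta=\tfrac12-\theta$.
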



\smallskip
\subsection{Outline of the paper}
The structure of the paper is outlined readily.
The above Subsection~\ref{ss:intro_main} introduced the reader to the FSI and to the obtained trace regularity results, collected in Theorem~\ref{t:main}.

Section~\ref{s:proofs} contains the proof of Theorem~\ref{t:main}, that is our main result.
It is preceded by the statement of two preliminary results.
Proposition~\ref{p:fsi-well-posed} illustrates the abstract reformulation of the PDE problem, along with the definition of the operators which describe the (free and) controlled dynamics.
Proposition~\ref{p:damped-traces} clarifies the regularity of the normal component of the elastic stress tensor on the interface, which constitutes a basic element for the the proof of Theorem~\ref{t:main}.

In the last Section~\ref{s:application} we move on to the interpretation of several PDE estimates 
established in Theorem~\ref{t:main}, as suitable control-theoretic properties which are the key to 
well-posedness of the Riccati equations that arise in the study of the optimal control problems
associated with the FSI. 
We first provide an overview of the broader context, and at the end briefly discuss the technical details
for the present FSI.


\section{Trace regularity results} \label{s:proofs}
\subsection{Preliminaries: function spaces, variational and semigroup formulation}
Insert into the PDE problem \eqref{e:dampedfsi-free} a control `force' $g=g(t,x)$ acting upon the
interface $\Gamma_s$.
Since throughout the paper the boundary dissipation term will occur in the FSI, i.e. $a_2>0$,
for the sake of simplicity and without loss of generality we set $a_2=a_1=1$.
This yields the following initial-boundary value problem (IBVP) with non-homogeneous bounday datum $g$: 

\begin{equation} \label{e:dampedfsi-controlled}
\begin{cases} 
u_t-{\rm div}\,\epsilon(u)+\nabla p= 0 & \textrm{in $Q_f$}
\\
{\rm div}\, u=0 & \textrm{in $Q_f$}
\\ 
w_{tt} - {\rm div}\,\sigma(w)+w=0 &\textrm{in $Q_s$}
\\
u=0 & \textrm{on $\Sigma_f$}
\\
\epsilon(u)\cdot\nu=\sigma(w)\cdot\nu + p\nu +g & \textrm{on $\Sigma_s$}
\\
w_t-\sigma(w)\cdot\nu=u & \textrm{on $\Sigma_s$}
\\
u(0,\cdot)=u_0 & \textrm{in $\Omega_f$}
\\
w(0,\cdot)=w_0\,, \quad w_t(0,\cdot)=w_1 & \textrm{in $\Omega_s$.}
\end{cases}
\end{equation}
Then, we associate to \eqref{e:dampedfsi-controlled} a {\em general} quadratic functional to be minimized overall $g\in L^2(0,T;L^2(\Gamma_s))$, such as the one involving the integral (on a time interval $(0,T)$, possibly with $T=+\infty$) of the full energy of the system, that is
\begin{equation}\label{e:energy-cost}
\int_0^T \Big(|u(t,\cdot)|_{\Omega_f}^2+(\epsilon(w(t,\cdot)),\sigma(w(t,\cdot)))_s
+ |w(t,\cdot)|_{\Omega_s}^2+|w_t(t,\cdot)|_{\Omega_s}^2+ |g(t,\cdot)|_{\Gamma_s}^2\Big)\, dt\,.   
\end{equation}
We recall that while the regularity analysis to be carried out -- eventually culminating in Theorem~\ref{t:main} -- actually pertains the {\em uncontrolled} (or {\em free}) PDE problem, i.e.~to \eqref{e:dampedfsi-controlled} with $g\equiv 0$, the obtained trace regularity results render feasible the sought-after {\em closed-loop} form of the optimal control $\hat{g}$ minimizing the cost functional \eqref{e:energy-cost}.

To translate the boundary control system \eqref{e:dampedfsi-controlled} into an abstract equation
we follow the same avenue taken in \cite{bucci-las-fsi_2010,bucci-las-fsi_2011}, whose starting point is the variational formulation of the PDE problem.
The resulting abstract framework is suited to the computations to be carried out in order to attain the trace regularity estimates.
The definition of {\em weak} solutions to the uncontrolled version of PDE system
\eqref{e:dampedfsi-controlled} is akin to the one introduced in \cite{barbu-etal_2007}.


\begin{definition}
[Weak solution] 
\label{def:weak-sol}
Let $(u_0,w_0,w_1)\in Y$ and $T>0$. 
A triple 
\begin{equation*}
(u,w,w_t)\in C([0,T],\cH\times H^1(\Omega_s)\times L^2(\Omega_s))
\end{equation*}
is said a weak solution to the PDE system \eqref{e:dampedfsi-controlled} if
\begin{itemize}

\item
$(u(0,\cdot),w(0,\cdot),w_t(0,\cdot))=(u_0,w_0,w_1)$,

\smallskip

\item
$u\in L^2(0,T;V)$,

\smallskip

\item

$\sigma(w)\cdot \nu \in L^2(0,T;H^{-1/2}(\Gamma_s))$, 
$\frac{d}{dt}w|_{\Gamma_s}-\sigma(w)\cdot \nu\big|_{\Gamma_s}
=u|_{\Gamma_s}\in L^2(0,T;H^{1/2}(\Gamma_s))$; 

\smallskip

\item
the following variational system holds a.e. in $(0,T)$:
\begin{equation} \label{e:variational-system}
\begin{cases}
\frac{d}{dt}(u,\phi)_{f} + (\epsilon(u),\epsilon(\phi))_{f}
- \langle\sigma(w)\cdot \nu + g,\phi\rangle=0
\\[1mm]
\frac{d}{dt}(w_t,\psi)_{s} + (\sigma(w),\epsilon(\psi))_{s}
-\langle\sigma(w)\cdot \nu,\psi\rangle+ (w,\psi)_s=0\,,
\end{cases}
\end{equation}
for all test functions $\phi\in V$ and $\psi\in H^1(\Omega_s)$.
\end{itemize}
\end{definition}
For the sake of completeness and the reader's convenience we observe that the given definition is justified as follows.
Multiply the equation for the fluid variable by a given test function $\phi\in V$,
thereby obtaining
\begin{equation}\label{e:variational-u} 
(u_t,\phi)_f-(\text{div}\,\epsilon(u),\phi)_f +(\nabla p,\phi)_f=0\,.
\end{equation}
Integration by parts (via the Green formulas) in the summand in the middle
yields (omitting the symbols ${\rm d}\Omega_f$, ${\rm d}\Omega_s$, ${\rm d}\Gamma_s$), ${\rm d}\Gamma_f$)

\begin{equation*}
\begin{split}
&(\text{div}\,\epsilon(u),\phi)_f -(\nabla p,\phi)_f
= \int_{\Omega_f}\phi \,\text{div}\,\epsilon(u)-\int_{\Omega_f} \nabla p\,\phi
\\[1mm]
& \quad 
=\int_{\partial\Omega_f}\phi \,\epsilon(u)\cdot \nu 
- \int_{\Omega_f}\epsilon(u)\epsilon(\phi)
- \int_{\Omega_f}\big(\text{div}(p\phi)-p\text{div}\,\phi\big)
\\[1mm]
& \quad 
=\int_{\Gamma_s}\phi \,\epsilon(u)\cdot \nu
-\int_{\Omega_f}\epsilon(u)\epsilon(\phi) 
-\int_{\partial\Omega_f}\phi p\nu
-\bcancel{\int_{\Omega_f}p\, \text{div}\,\phi}
\\[1mm]
& \quad 
=\int_{\Gamma_s}(\sigma(w)\cdot \nu +p\nu+g)\phi
-\int_{\Omega_f}\epsilon(u)\epsilon(\phi)
- \int_{\Gamma_s}\phi p\nu-\bcancel{\int_{\Gamma_f}\phi p\nu}
\\[1mm]
& \quad 
=\int_{\Gamma_s}(\sigma(w)\cdot \nu +g)\phi
-\int_{\Omega_f}\epsilon(u)\epsilon(\phi)
=-\big[(\epsilon(u),\epsilon(\phi))_f
-\langle\sigma(w)\cdot \nu+g,\phi\rangle\big]\,.
\end{split}
\end{equation*}
In the above computations the distinct boundary conditions (BC) 

\begin{equation*} 
\begin{cases}
u=0  & \text{on $\Gamma_f$ (non-slip BC)}
\\[1mm]
\epsilon(u)\cdot\nu=\sigma(w)\cdot\nu+p\nu+g
& \text{on $\Gamma_s$ (transmission BC),}
\end{cases}
\end{equation*}
have been taken into account, whilst $\text{div}\,\phi=0$ and $\phi|_{\Gamma_f}=0$, because $\phi\in V$.
Thus, the reformulation of \eqref{e:variational-u} which embodies the BC
reads as
\begin{equation}\label{e:equivalent} 
\frac{d}{dt}(u,\phi)_f+(\epsilon(u),\epsilon(\phi))_f
-\langle\sigma(w)\cdot \nu+g,\phi\rangle=0\,,
\end{equation}
that is nothing but the first one of \eqref{e:variational-system}.
The computations pertaining to the equation for the solid variable $w$
are even more straightforward.

\smallskip
We proceed as in \cite{bucci-las-fsi_2010,bucci-las-fsi_2011}, which in turn
followed \cite{las-tuff-fsi_2009}.
We introduce the fluid dynamics operator $A\colon V\to V'$ defined by 
\begin{equation}\label{e:fluid-operator}
(Au,\phi) = -(\epsilon(u),\epsilon(\phi))
\qquad \forall \,\phi\in V\,,
\end{equation}
as well as the (Neumann) map $N\colon L^2(\Gamma_s)\to \cH$ defined as follows,
\begin{equation}\label{e:neumann-map}
Ng=h \Longleftrightarrow
(\epsilon(h),\epsilon(\phi))= \langle g,\phi\rangle \qquad \forall \phi\in V\,.
\end{equation}
These operators are the key elements for the semigroup formulation of the IBVP \eqref{e:dampedfsi-controlled}.
We recall from \cite[Section~4]{las-tuff-fsi_2009} that $A$ defined by \eqref{e:fluid-operator} may be considered as acting on $\cH$, with domain 
$\cD(A):=\{\,u\in V\colon \,; |(\epsilon(u),\epsilon(\phi))|\le C|\phi|_{\cH}\,\}$.
Thus, $A\colon\cD(A)\subset \cH\to \cH$ is a self-adjoint, negative operator and 
therefore is the infinitesimal generator an {\em analytic} semigroup $e^{At}$,
$t\ge 0$, on $\cH$.
Therefore, the fractional powers of $-A$ are well defined;
to simplify the notation, we shall denote them by $A^\alpha$ (rather than by $(-A)^\alpha$)
throughout.

On the basis of the respective definitions \eqref{e:fluid-operator} and \eqref{e:neumann-map} of the operators $A$ and $N$, we see that  
\begin{equation*}
\langle\sigma(w)\cdot \nu+g,\phi\rangle=
(\epsilon(N(\sigma(w)\cdot \nu+g)),\epsilon(\phi))_f
=-(AN (\sigma(w)\cdot \nu+g),\phi)\,,
\end{equation*}
which inserted in \eqref{e:equivalent} yields

\begin{equation*}
(u_t,\phi)= (Au,\phi)_f-(AN(\sigma(w)\cdot \nu+g),\phi)_f\,.
\end{equation*}
The principal facts concerning the abstract setup for problem \eqref{e:dampedfsi-controlled}, including the semigroup well-posedness for the uncontrolled problem, are collected in the following Proposition. 

\begin{proposition}\label{p:fsi-well-posed}
(a) Let $Y=\cH\times H^1(\Omega_s)\times L^2(\Omega_s)$ and $U:=L^2(\Gamma_s)$.
The initial-boundary value problem \eqref{e:dampedfsi-controlled} can be recast as the abstract Cauchy problem
\begin{equation} \label{e:cauchy-pbm}
\begin{cases} 
y'=\cA y+\cB g
\\[1mm]
y(0)=y_0\in Y\,,
\end{cases} 
\end{equation}
where the linear (dynamics and control, respectively) operators $\cA$ and $\cB$ 
are defined as follows: 

\begin{align}
&
\begin{cases}
& \cD(\cA)=\big\{(u,w,z)^T\in Y\colon \;
u\in V\,, \; \text{div}\,\sigma(w)\in L^2(\Omega_s)\,, \; z\in H^1(\Omega_s)\,, 
\\[1mm]
&\myspace
u-N\sigma(w)\cdot \nu\in \cD(A)\,, \; [z-\sigma(w)\cdot \nu]\big|_{\Gamma_s}=u\big|_{\Gamma_s}
\big\}
\\[3mm]
& \cA \begin{pmatrix} u\\[1mm]w\\[1mm]z\end{pmatrix}
=
\begin{pmatrix}
A\big(u-N\sigma(w)\cdot \nu\big)
\\[1mm]
z
\\[1mm]
\text{div}\,\sigma(w)-w
\end{pmatrix}\,,
\end{cases}
\label{e:operator-a}
\\[3mm] 
& \qquad \cB g =\begin{pmatrix} -AN\,g\\[1mm] 0 \\[1mm] 0\end{pmatrix}\,.
\label{e:operator-b}
\end{align}

\smallskip
\noindent
(b) The operator $\cA\colon \cD(\cA)\subset Y \longrightarrow Y$ is the infinitesimal
generator of a strongly continuous semigroup $\{e^{t\cA}\}_{t\ge 0}$ of contractions
in $Y$, while $\cB\in \cL(U,[\cD({\cA}^*)]')$.
(The equation in \eqref{e:cauchy-pbm} is initially understood in $[\cD({\cA}^*)]'$.)  
In addition, the semigroup $e^{t\cA}$ is exponentially stable, namely, there exist constants $C, \omega>0$ such that $\|e^{t\cA}\|_{\cL(Y)}\le C e^{-\omega t}$, $t\ge 0$.
\end{proposition}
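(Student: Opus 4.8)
The plan is to treat part (a) as essentially a bookkeeping exercise and to concentrate the effort on part (b). For (a), the first scalar equation of \eqref{e:variational-system} has already been matched to the operator action through the Neumann map $N$ in the computation preceding the statement; the analogous (and simpler) manipulation of the Lam\'e equation, integrating ${\rm div}\,\sigma(w)$ by parts against $\psi\in H^1(\Omega_s)$, reproduces the second equation and simultaneously forces the transmission relation $[z-\sigma(w)\cdot\nu]\big|_{\Gamma_s}=u\big|_{\Gamma_s}$ built into $\cD(\cA)$. I would therefore only need to verify that a triple solving the abstract problem \eqref{e:cauchy-pbm} is, componentwise, a weak solution in the sense of Definition~\ref{def:weak-sol}, and conversely, with $z$ playing the role of $w_t$.

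For the generation statement I would invoke the Lumer--Phillips theorem, so the two ingredients are dissipativity and maximality. Equip $Y$ with the energy inner product $(y_1,y_2)_Y=(u_1,u_2)_f+(\sigma(w_1),\epsilon(w_2))_s+(w_1,w_2)_s+(z_1,z_2)_s$, whose induced norm is equivalent to the natural norm of $Y$ by Korn's inequality. The core computation is to evaluate ${\rm Re}\,(\cA y,y)_Y$ for $y=(u,w,z)\in\cD(\cA)$. Writing $h=u-N\sigma(w)\cdot\nu\in\cD(A)$ and using $(Ah,u)_f=-(\epsilon(h),\epsilon(u))_f$ together with the defining identity $(\epsilon(N\xi),\epsilon(\phi))_f=\langle\xi,\phi\rangle$ of the Neumann map, the fluid slot yields $-|\epsilon(u)|_{\Omega_f}^2+\langle\sigma(w)\cdot\nu,u\rangle$. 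Green's formula on $\Omega_s$ (recalling that $\nu$ points into $\Omega_s$, so the outward normal there is $-\nu$) turns the two elastic slots into $-\langle\sigma(w)\cdot\nu,z\rangle$, the interior contributions cancelling by the symmetry $(\sigma(z),\epsilon(w))_s=(\sigma(w),\epsilon(z))_s$ and by $(z,w)_s=(w,z)_s$. Collecting and then substituting the transmission condition $u-z=-\sigma(w)\cdot\nu$ on $\Gamma_s$ gives the clean dissipation identity
$$
{\rm Re}\,(\cA y,y)_Y=-|\epsilon(u)|_{\Omega_f}^2-|\sigma(w)\cdot\nu|_{\Gamma_s}^2\le 0,
$$
in which the boundary term is precisely the contribution of the dissipation $a_2>0$.

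Maximality amounts to solving $(\lambda I-\cA)y=F$ for one (equivalently every) $\lambda>0$ and arbitrary $F\in Y$. I would eliminate $z=\lambda w-f_2$, reduce the system to a coupled stationary problem for $(u,w)$, and recast it variationally on $V\times H^1(\Omega_s)$; the associated bilinear form inherits coercivity from the same structure that produced the dissipation identity (augmented by the $\lambda$- and zeroth-order $w$-terms), so Lax--Milgram delivers a unique weak solution, and an a posteriori regularity and trace argument places $y$ in $\cD(\cA)$. For the control operator the natural route is to exhibit the adjoint: for $\psi=(\psi_1,\psi_2,\psi_3)\in\cD(\cA^*)$ one computes $(\cB g,\psi)_Y=(-ANg,\psi_1)_f=\langle g,\psi_1|_{\Gamma_s}\rangle$ (up to sign), again through the Neumann-map identity and the self-adjointness of $A$; since $\psi_1$ lies in the $\cD(A)$-component of $\cD(\cA^*)$, its trace on $\Gamma_s$ is controlled in $L^2(\Gamma_s)=U$, whence $\cB^*\in\cL(\cD(\cA^*),U)$ and, dually, $\cB\in\cL(U,[\cD(\cA^*)]')$.

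Finally, the exponential stability is the genuinely hard ingredient, and I would not reprove it: it is the main result of Avalos and Triggiani \cite{avalos-trig-dampedfsi_2009}, obtained through a frequency-domain (Gearhart--Pr\"uss) analysis. Conceptually, the dissipation identity already yields monotone decay of the energy; to upgrade this to uniform decay one verifies $i\mathbb{R}\subset\rho(\cA)$ together with a uniform bound $\sup_{\beta\in\mathbb{R}}\|(i\beta-\cA)^{-1}\|_{\cL(Y)}<+\infty$, the obstruction being the exclusion of purely imaginary eigenvalues and of resonance at high frequency, which is exactly where the combined effect of the boundary damping and the coercive $a_1>0$ term enters. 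I expect this resolvent estimate --- rather than the generation or the boundedness of $\cB$ --- to be the principal obstacle, and it is the appeal to \cite{avalos-trig-dampedfsi_2009} that makes the last assertion of the Proposition available.
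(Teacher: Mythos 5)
Your proposal is correct, and it lines up with the paper's proof in its overall architecture: part (a) is dispatched by the variational discussion preceding the statement, and the exponential decay is not reproved but imported from \cite{avalos-trig-dampedfsi_2009} (Theorem~1.2 there), exactly as you anticipate. The differences are in how much is carried out explicitly and by which device. For the generation claim the paper simply points to Proposition~3.1 of \cite{barbu-etal_2007} ``mutatis mutandis'', whereas you actually execute the Lumer--Phillips argument; your dissipation identity
\[
(\cA y,y)_Y=-|\epsilon(u)|_{\Omega_f}^2-|\sigma(w)\cdot\nu|_{\Gamma_s}^2
\]
checks out (the interior elastic terms cancel by symmetry, Green's formula on $\Omega_s$ produces $-\langle\sigma(w)\cdot\nu,z\rangle$ because the outward normal of $\Omega_s$ is $-\nu$, and the interface condition $u-z=-\sigma(w)\cdot\nu$ converts $\langle\sigma(w)\cdot\nu,u-z\rangle$ into the boundary damping term), and it is precisely the relation that later feeds \eqref{e:basic-reg-stress}; the maximality step is only sketched, but so is it in the source being cited, and the Lax--Milgram route is the standard one. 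For the control operator the paper uses a different, slightly slicker device: it computes $\cA^{-1}\cB g=(-Ng,0,0)^T$, so that $\cA^{-1}\cB\in\cL(U,Y)$ gives $\cB\in\cL(U,[\cD(\cA^*)]')$ at once, while you verify the dual statement $\cB^*\in\cL(\cD(\cA^*),U)$ via $\cB^*\psi=\psi_1|_{\Gamma_s}$. The two are equivalent; your route has the advantage of exhibiting $\cB^*$ explicitly (which is what is actually used in Section~\ref{s:application}), the paper's has the advantage of requiring no information about $\cD(\cA^*)$ beyond invertibility of $\cA$. No gap.
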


\begin{proof}
\hspace{1mm} 
(a) 
The reformulation of the initial-boundary value problem \eqref{e:dampedfsi-controlled} 
as the Cauchy problem \eqref{e:cauchy-pbm}, with $\cA$ and $\cB$ defined by \eqref{e:operator-a} and \eqref{e:operator-b}, respectively, comes along with its variational formulation and the discussion
preceding the Proposition's statement.   
\\
(b) 
That $\cA$ is the infinitesimal generator of a strongly continuous semigroup of contractions
$\{e^{t\cA}\}_{t\ge 0}$ in $Y$ is proved following -- {\em mutatis mutandis} -- the proof of Proposition~3.1 in \cite{barbu-etal_2007}.
To confirm that $\cB\in \cL(U,[\cD({\cA}^*)]')$, it suffices to compute $\cA^{-1}\cB$.
It is readily verified that $\cA^{-1}\cB g=(-Ng,0,0)^T$ for $g\in U$, showing $\cA^{-1}\cB\in \cL(U,Y)$, which is equivalent to $\cB\in \cL(U,[\cD({\cA}^*)]')$.
Finally, uniform (exponential) stability of the free dynamics generator was shown in \cite[Theorem~1.2]{avalos-trig-dampedfsi_2009}.  
\end{proof}

\begin{remarks}
\begin{rm}
We recall that the stability analysis for the uncontrolled FSI \eqref{e:dampedfsi-controlled}
pursued in \cite{avalos-trig-dampedfsi_2009} is based on a different semigroup setup (devised 
by the same authors in \cite{avalos-trig-fsi_2009}), whose free dynamics generator
arises after finding that the pressure $p$ solves a suitable elliptic boundary problem and hence can be represented via proper Green maps.
(This idea has proved very effective also in the analysis of very different FSI, such as the one studied
in \cite{avalos-bucci-fsi1,avalos-bucci-fsi2}.)
As it will appear clearer in the next subsection, multipliers/energy methods allow to attain a dissipation relation -- {\em viz.}~(1.24) in \cite{avalos-trig-dampedfsi_2009} -- which readily yields the 
trace regularity estimate $\sigma(w)\cdot \nu\in L^2(0,T;L^2(\Gamma))$
for the hyperbolic component of the damped FSI; see \eqref{e:basic-reg-stress} below.
We note that this was not the case for the undamped FSI, i.e. \eqref{e:dampedfsi-free} with $a_2=a_1=0$.
Indeed, the respective regularity $\sigma(w)\cdot \nu\in L^2(0,T;H^{-1/2}(\Gamma))$ 
for the normal component of the stress tensor on the interface 
is not intrinsic to -- and not disclosed by -- the semigroup framework of 
\cite{avalos-trig-fsi_2009}.
It was {\em embedded in the very definition of weak solutions} to the FSI and shown to hold true
in \cite{barbu-etal_2007}, utilizing both nonlinear semigroup theory and microlocal analysis arguments. 
\end{rm}
\end{remarks}

\subsection{Boundary regularity of the normal stresses}
In this subsection we render explicit the regularity of the normal 
component of the stress tensor on the interface $\Gamma_s$, for strong and strict solutions of the {\em uncontrolled} FSI \eqref{e:dampedfsi-controlled}, as well as for solutions corresponding to initial data in intermediate spaces between the state space $Y$ and the domain of the (free dynamics) generator.
The distinct trace regularity results constitute fundamental elements for the subsequent analysis of the boundary regularity of the fluid variable $u$ and $u_t$.

\begin{proposition}\label{p:damped-traces}
Consider the FSI \eqref{e:dampedfsi-controlled} with $g\equiv 0$.
Then, the following statements are valid.
(i) For initial data $(u_0,w_0,w_1)=:y_0$ which belong to the energy space $Y$, the corresponding semigroup (weak) solutions $y(t):=(u(t),w(t),w_t(t))=e^{\cA t} y_0$ are such that
\begin{equation}\label{e:basic-reg-stress}
\sigma(w)\cdot \nu \in L^2(0,T;L^2(\Gamma_s)) \qquad \forall T>0\,.
\end{equation}

\noindent
(ii) For initial data $(u_0,w_0,z_0)$ which belong to the domain $\cD(\cA)$ of the dynamics generator, the corresponding solutions $(u(t),w(t),w_t(t))$ are such that
\begin{equation}\label{e:improved-reg-stress}
\sigma(w)\cdot \nu \in C([0,T],H^{1/2}(\Gamma_s))\cap H^1(0,T;L^2(\Gamma_s)) 
\qquad \forall T>0\,.
\end{equation}

\noindent
(iii) Let $0<\epsilon<1$. 
For initial data $(u_0,w_0,z_0)$ which belong to $\cD(\cA^\epsilon)$,
the corresponding semigroup solutions $(u(t),w(t),w_t(t))$ satisfy
\begin{equation}\label{e:interpolate}
\sigma(w)\cdot \nu \in L^2(0,T;H^{\epsilon/2}(\Gamma_s))
\cap H^\epsilon(0,T;L^2(\Gamma_s)) \qquad \forall T>0\,.
\end{equation}

\end{proposition}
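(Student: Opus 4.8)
The plan is to prove the three assertions in sequence, treating (i) as a base case obtained from an energy identity, (ii) by pairing a static trace identity with time differentiation, and (iii) by interpolating between the endpoint estimates corresponding to $\epsilon=0$ and $\epsilon=1$. For (i) I would run the multiplier computation that underlies the dissipation relation recalled in the Remarks. Testing the fluid equation against $u$ and the elastic equation against $w_t$, integrating by parts, and invoking the transmission conditions $\epsilon(u)\cdot\nu=\sigma(w)\cdot\nu+p\nu$ and $u=w_t-\sigma(w)\cdot\nu$ on $\Gamma_s$, the pressure boundary flux cancels and the remaining interface couplings combine (after substituting $u=w_t-\sigma(w)\cdot\nu$) into a nonnegative dissipation term, yielding
\[
\tfrac12\tfrac{d}{dt}\big(|u(t)|_{\Omega_f}^2+(\sigma(w(t)),\epsilon(w(t)))_s+|w(t)|_{\Omega_s}^2+|w_t(t)|_{\Omega_s}^2\big)+|\epsilon(u(t))|_{\Omega_f}^2+|\sigma(w(t))\cdot\nu|_{\Gamma_s}^2=0 .
\]
Integrating on $(0,T)$ and bounding the initial energy by $\|y_0\|_Y^2$ gives $\int_0^T|\sigma(w)\cdot\nu|_{\Gamma_s}^2\,dt\le C\|y_0\|_Y^2$, which is \eqref{e:basic-reg-stress}; the identity is first justified for strong solutions ($y_0\in\cD(\cA)$) and then extended to all $y_0\in Y$ by density and continuous dependence.

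For (ii) the two memberships in \eqref{e:improved-reg-stress} come from independent arguments. The spatial regularity is read off directly from the structure of $\cD(\cA)$ in \eqref{e:operator-a}: the encoded interface condition gives $\sigma(w)\cdot\nu=(z-u)\big|_{\Gamma_s}$, and since $z\in H^1(\Omega_s)$ and $u\in V\subset H^1(\Omega_f)$ depend boundedly on the graph norm, the trace theorem yields $\sigma(w)\cdot\nu\in H^{1/2}(\Gamma_s)$; continuity of $t\mapsto y(t)$ into $\cD(\cA)$ (because $\cA y(t)=e^{t\cA}\cA y_0$ is continuous into $Y$) upgrades this to $C([0,T],H^{1/2}(\Gamma_s))$. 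The temporal regularity exploits time-invariance: $y_t(t)=e^{t\cA}\cA y_0$ is itself a semigroup solution issued from the datum $\cA y_0\in Y$, whose elastic component is $w_t$, so applying (i) to it gives $\partial_t(\sigma(w)\cdot\nu)=\sigma(w_t)\cdot\nu\in L^2(0,T;L^2(\Gamma_s))$; combined with \eqref{e:basic-reg-stress} this is the asserted $H^1(0,T;L^2(\Gamma_s))$ regularity.

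Finally, (iii) follows by operator interpolation. The linear map $\cR\colon y_0\mapsto\sigma(w)\cdot\nu$ is bounded $Y\to L^2(0,T;L^2(\Gamma_s))$ by (i) and $\cD(\cA)\to L^2(0,T;H^{1/2}(\Gamma_s))\cap H^1(0,T;L^2(\Gamma_s))$ by (ii). Interpolating the two target scales separately, via $[L^2(\Gamma_s),H^{1/2}(\Gamma_s)]_\epsilon=H^{\epsilon/2}(\Gamma_s)$, the commutation of complex interpolation with $L^2(0,T;\cdot)$, and $[L^2(0,T;L^2),H^1(0,T;L^2)]_\epsilon=H^\epsilon(0,T;L^2)$, produces exactly \eqref{e:interpolate}, provided the interpolation space of the data is identified as $[Y,\cD(\cA)]_\epsilon=\cD(\cA^\epsilon)$. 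This last identity is the main obstacle: since $\cA$ generates a contraction semigroup that is \emph{not} analytic (the dynamics has a genuinely hyperbolic component), the identification is not automatic and must be justified from the available structure -- namely that $-\cA$ is maximal accretive with $0\in\rho(\cA)$ (Proposition~\ref{p:fsi-well-posed}(b)) -- or else circumvented by passing to the real interpolation method together with the standard embeddings relating real interpolation spaces and fractional-power domains. A minor additional point is the bookkeeping in (ii): confirming that the differentiated triple really is the semigroup solution issued from $\cA y_0$ and that $\sigma(\cdot)\cdot\nu$ commutes with $\partial_t$ on the interface.
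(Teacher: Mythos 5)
Your argument is correct and follows essentially the same route as the paper: part (i) rests on the boundary dissipation identity (which the paper simply imports as formula (1.24) of \cite{avalos-trig-dampedfsi_2009}, whereas you rederive it with the energy multipliers $u$, $w_t$ and the interface conditions), part (ii) combines the interface condition encoded in $\cD(\cA)$ with an application of (i) to the time-differentiated trajectory $e^{t\cA}\cA y_0$, and part (iii) is obtained by interpolating between (i) and (ii). The only place you go beyond the paper is in flagging that the identification $[Y,\cD(\cA)]_{\epsilon}=\cD(\cA^{\epsilon})$ is not automatic for a non-analytic contraction generator; the paper uses it tacitly, and your proposed remedies (invoking maximal accretivity of $-\cA$, or switching to real interpolation at the cost of an arbitrarily small loss in $\epsilon$) are both adequate.
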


\begin{proof}
Assume $y_0=(u_0,w_0,w_1)\in Y$. 
The first statement follows from a dissipation identity, {\em viz.} formula (1.24)
in \cite{avalos-trig-dampedfsi_2009}. 
The said equality -- which can be proved by using multipliers methods -- clarifies
in particular that for any $s,t$ with $0\le s\le t$ it holds
\begin{equation*}
\int_s^t |\sigma(w(\tau,\cdot))\cdot\nu|_{\Gamma_s}^2\,d\tau\le C\|y_0\|_Y^2\,;
\end{equation*}
taking $s=0$ and $t=T>0$ we obtain \eqref{e:basic-reg-stress}.

\smallskip
\noindent
For the second assertion, we assume $y_0\in \cD(\cA)$ and observe that
semigroup theory implies $y=(u,w,w_t)\in C([0,T],\cD(\cA))$.  
By the very definition of the domain of the free dynamics generator $\cA$ in
\eqref{e:operator-a}, we know that 
\begin{equation*}
u\in C([0,T],V)\subset C([0,T],H^1(\Omega_f))\,, \quad
w_t\in C([0,T],H^1(\Omega_s))\,.
\end{equation*}
Sobolev trace theory then obtains 
$u|_{\Gamma_s}\,, \, w_t|_{\Gamma_s}\in C([0,T],H^{1/2}(\Gamma_s))$.
Thus, using once more the definition of $\cD(\cA)$ and specifically
the interface condition, we see that
\begin{equation}\label{e:improved-reg-stress_1}
\sigma(w)\cdot \nu\big|_{\Gamma_s}=\big[w_t-u\big]\big|_{\Gamma_s}
= w_t|_{\Gamma_s}-u|_{\Gamma_s}\in C([0,T],H^{1/2}(\Gamma_s))\,.
\end{equation}
In addition, still with $y_0\in \cD(\cA)$, since the generator commutes with the
semigroup, $\sigma(w_t)\cdot \nu$ possesses the very same regularity than $\sigma(w)\cdot \nu$ with $y_0\in Y$; namely, $\sigma(w_t)\cdot \nu\in L^2(0,T;L^2(\Gamma_s))$.
The latter membership, combined with \eqref{e:improved-reg-stress_1},
confirms \eqref{e:improved-reg-stress}.

\smallskip
\noindent
The third statement, namely, \eqref{e:interpolate}, follows from \eqref{e:basic-reg-stress} and \eqref{e:improved-reg-stress} via interpolation.
\end{proof}

\smallskip
Thus, while moving towards the proof of our main result, we record
a few regularity results pertaining to the mild solution 
\begin{equation}\label{e:convolution} 
z(t)=\int_0^t e^{(t-s)A}f(s)\,ds\,
\end{equation}
to the Cauchy problem
\begin{equation*}
\begin{cases}
z'=Az+f
\\[1mm]
z(0)=0
\end{cases}
\end{equation*}
with $L^2$ (in time) affine term, in the case $\{e^{tA}\}_{t\ge 0}$ is an analytic semigroup on a Hilbert space $Y$.
One has
\begin{equation}\label{e:analytic-estimates}
f\in L^2(0,T;Y) \Longrightarrow
\begin{cases}
& z\in L^2(0,T;\cD(A))\,,
\\[1mm]
& z\in C([0,T],(\cD(A),Y)_{\frac{1}{2}})\,,
\\[1mm]
& z\in C([0,T],\cD((-A)^{1/2-\sigma}))\,, \qquad 0<\sigma<\frac{1}{2}\,. 
\end{cases}
\end{equation}
 
\begin{remark} \label{r:control-theoretic}
\begin{rm}
We note that the first statement is an instance of the parabolic regularity which dates back to de Simon \cite{desimon_64}; it constitutes by now a basic result of the theory of analytic operator semigroups.
A neat proof of this implication is given by Lasiecka in \cite[Appendix~A]{las_1980}.
Instead, the second statement is a consequence of an ``intermediate derivative theorem'' by 
Lions and Magenes \cite[Theorem~2.3, p.~15]{lions-mag_72}.
As a general reference on analytic semigroup and optimal regularity for parabolic problems, 
{\em cf.}~Lunardi's monograph \cite{lunardi-book}.
\end{rm}
\end{remark}

\subsection{Proof of Theorem~\ref{t:main} (Main result)}
{\bf \em i)} 
Assume $y_0=(u_0,w_0,w_1)\in Y$.
By Proposition~\ref{p:fsi-well-posed} we know that the fluid variable $u(t)$ 
is the mild solution of the equation $u'=Au-AN\sigma(w)\cdot \nu$; 
therefore, it is given by the formula 
\begin{equation*}
u(t)=e^{tA}u_0-A\int_0^t e^{(t-s)A} N\sigma(w(s))\cdot \nu\,ds\,.
\end{equation*}
Apply the Dirichlet trace operator to $u(t)$, recalling that it coincides with $-N^*A$ ({\em cf.}~\cite[Proposition~4.3]{las-tuff-fsi_2009}, also recorded as Lemma~A.1 in \cite{bucci-las-fsi_2010}),
to find
\begin{equation*}
\begin{split}
u(t)|_{\Gamma_s}&=-N^*A u(t) =
-N^*A e^{tA}u_0 + N^*A\int_0^t e^{(t-s)A} AN\sigma(w(s))\cdot \nu\,ds
\\[1mm]
&=:T_1(t)+T_2(t)\,.
\end{split}
\end{equation*}
For the first summand we readily obtain 
\begin{equation} \label{e:1a}
|T_1(t)|_{\Gamma_s}
\le \|N^*A^{3/4-\delta}\|\,\big\|A^{1/4+\delta}e^{A t}u_0\big\|
\le C\,\frac{e^{-\eta t}}{t^{1/4+\delta}}|u_0|\,,
\end{equation}
valid for all $t>0$ and suitable constants $C,\,\eta>0$.
(In the last inequality it has been used that the Stokes semigroup is analytic, as well as
that the solutions to the uncoupled Stokes flow decay exponentially.)

The analysis of the summand $T_2(t)$ simplifies 
over the one carried out in \cite[Proof of Theorem~2.10]{bucci-las-fsi_2010} in the undamped case ($a_2=0$).
This is because the improved regularity of the hyperbolic component of the system brought about by the presence of the feedback stabilizer 
-- and singled out in Proposition~\ref{p:damped-traces} -- overcomes the further splitting required by the application of the trickier Lemma~A.2 in \cite{bucci-las-fsi_2010}.
Thus, it will suffice to follow the computations performed on the term $u_{22}$ 
therein to achieve the sought regularity estimate; see~\cite[p.~228]{bucci-las-fsi_2010}.
Below we repeat the argument for the reader's convenience and the sake of completeness.

Rewrite $T_2(t)$ as follows:
\begin{equation*}
\begin{split}
T_2(t) &=N^*A\int_0^t e^{A (t-s)}AN\sigma(w(s))\cdot \nu\,ds
\\[1mm]
&=N^*A^{3/4-\delta}\int_0^t A^{1/2+2\delta}e^{A (t-s)}\,A^{3/4-\delta}N\sigma(w(s))\cdot \nu\,ds\,.
\end{split}
\end{equation*}
Because 
\begin{equation*}
A^{3/4-\delta}N\in \cL(L^2(\Gamma_s),L^2(\Omega_f))\,,
\quad N^*A^{3/4-\delta}\in \cL(L^2(\Omega_f),L^2(\Gamma_s))\,,
\end{equation*}
the boundary regularity (in time and space) of $T_2(t)$ on $\Gamma_s$ is
determined by the one possessed by the convolution term
\begin{equation*}
\int_0^t k(t-s)\,h(s)\,ds\,,
\end{equation*} 
specifically with
\begin{equation*}
k(s):=A^{1/2+2\delta}e^{A s}\,, \qquad 
h(s):= A^{3/4-\delta}N \sigma(w(s))\cdot \nu\,.
\end{equation*}
Recall now the basic asymptotic estimate (in a right neighbourhood of $s=0$)

\begin{equation*}
\|k(s)\|=\|A^{1/2+2\delta}e^{A s}\|\sim \frac{C}{s^{1/2+2\delta}}\,,
\end{equation*}
together with the basic regularity \eqref{e:basic-reg-stress} of $\sigma(w)\cdot\nu$, 
that is \eqref{e:basic-reg-stress} of Proposition~\ref{p:damped-traces}, to infer 

\begin{equation*}
k\in L^{2-\sigma}(0,T;L^2(\Omega_f))\,,\qquad
h\in L^2(0,T;L^2(\Omega_f))\,,
\end{equation*}
where $\sigma>0$ can be taken arbitrarily small.
By virtue of the Young inequality, it follows
\begin{equation*}
k\ast h\in L^r(0,T;L^2(\Omega_f))\,, 
\quad \frac{1}{r}=\frac{1}{2-\sigma}+\frac{1}{2}-1\,,
\end{equation*}
which establishes $k\ast h\in L^r(0,T;L^2(\Omega_f))$ for any finite $r\ge 1$.
In the present case then, we find
\begin{equation} \label{e:1b}
T_2\in L^r(0,T;L^2(\Gamma_s)) \qquad \forall r\ge 1\,.
\end{equation}
(Note that \eqref{e:1b} cannot be extended to $r=+\infty$, as $\sigma>0$.)
We conclude that the sought decomposition $u(t)=u_1(t)+u_2(t)$ is indeed simply the one with
\begin{equation*}
u_1(t):=e^{A t}u_0\,,
\qquad
u_2(t):=-A\int_0^t e^{A (t-s)}N\sigma(w(s))\cdot \nu\,ds\,,
\end{equation*}
with \eqref{e:1a} and \eqref{e:1b} confirming \eqref{e:se-damped} and \eqref{e:u_2-reg}, respectively.
 
\smallskip
\noindent
{\bf \em ii)} 
Assume now $y_0\in \cD(\cA^\epsilon)$ for some positive $\epsilon<1$.
From Proposition~\ref{p:damped-traces} we know that 
\eqref{e:interpolate} holds.
Exploiting the improved space regularity
$\sigma(w)\cdot \nu\in L^2(0,T;H^{\epsilon/2}(\Gamma_s))$, we suitably rewrite
$u_2$ as follows:
\begin{equation*}
\begin{split}
u_2(t)&=-A\int_0^t e^{A (t-s)}N\sigma(w(s))\cdot \nu\,ds
\\[1mm]
&=-A^{1/4+\delta}A^{-\epsilon/4}
\int_0^t e^{A (t-s)}\,\underbrace{A^{\epsilon/4}\big[A^{3/4-\delta}N\big]\sigma(w(s))\cdot \nu}_{f(s)}\,ds\,.
\end{split}
\end{equation*}
Apply next the trace operator $-N^*A$ to find
\begin{equation}\label{e:last-rewrite}
u_2(t)\big|_{\Gamma_s}=[N^*A^{3/4-\delta}]A^{1/2+2\delta}A^{-\epsilon/4}
\int_0^t e^{A (t-s)}f(s)\,ds\,,
\end{equation}  
where $f\in L^2(0,T;L^2(\Omega_f))$.
On the basis of \eqref{e:last-rewrite}, we appeal to the latter estimate
in \eqref{e:analytic-estimates} to deduce that 
$u_2|_{\Gamma_s}\in C([0,T],L^2(\Gamma_s))$ i.e.~\eqref{e:u_2-suffices} holds true
($\delta$ must be suitably chosen: any $0<\delta<\epsilon/8$ will work).

The obtained trace regularity result \eqref{e:u_2-suffices} for the component $u_2$ can be actually
made more precise.
Indeed, return to \eqref{e:interpolate} which gives {\em a fortiori} 
$\sigma(w)\cdot \nu\in H^{\epsilon/2}(\Sigma_s)$. 
By way of the optimal parabolic regularity, 
$u_2\in H^{(\epsilon+3)/2,(\epsilon+3)/4}(Q_f)$ follows; 
thus, standard Sobolev trace theory yields 
\begin{equation*}
u_2\big|_{\Gamma_s}=-N^*Au_2\in H^{\epsilon/2+1,\epsilon/4+1/2}(\Sigma_s))
\subset H^{1/2+\epsilon/4}(0,T;L^2(\Gamma_s))\,, 
\end{equation*}
that is precisely the stronger \eqref{e:u_2-reg-improved}. 

\smallskip
\noindent
{\bf \em iii)} 
Take now $y_0\in \cD({\cA}^{1-\theta})$, $\theta\in (0,1)$.
On the one hand, the proof of the trace regularity result \eqref{e:u_t-reg} 
pertaining to the fluid acceleration $u_t$ can be streamlined, in comparison with
\cite[Proof of Theorem~2.10, (iii)]{bucci-las-fsi_2010}.
(This is again a consequence of the fact that \cite[Lemma~2.4]{bucci-las-fsi_2011}
is here replaced by Proposition~\ref{p:damped-traces}.)
On the other hand, we seek to disclose the improved ({\em space}) regularity 
\eqref{e:u_t-better-reg}, even though for the purpose of invoking the theory in
\cite{abl_2005,abl_2013} the membership $L^q(0,T;L^2(\Gamma_s))$ in \eqref{e:u_t-reg} would suffice. 
A shared tool for both proofs are interpolation result which also involve
the dual of Sobolev spaces with fractional exponents ({\em cf.}~\cite[Theorem~12.5]{lions-mag_72}).

We start from the expression 
\begin{equation*}
u_t(t)=\underbrace{Ae^{A t}u_0}_{V_1(t)}
+\underbrace{\frac{d}{dt}\int_0^t e^{A (t-s)}AN\sigma(w(s))\cdot \nu\,ds}_{V_2(t)}\,,
\end{equation*}
which is meant in the sense of distributions.
Assuming initially $y_0\in Y$, we compute 
\begin{equation*}
V_2(t)=AN \sigma(w(t))\cdot \nu
+A\int_0^t e^{A (t-s)}AN\sigma(w(s))\cdot \nu\,ds
=:V_{2a}(t)+V_{2b}(t)\,,
\end{equation*}
and pinpoint the regularity of either summand by rewriting and splitting as follows:
\begin{equation*}
\begin{split}
V_{2a}(t) &=A^{1/4+\delta_1}\big[A^{3/4-\delta_1}N\big] \sigma(w(t))\cdot \nu
\in L^2(0,T;[\cD(A^{1/4+\delta_1})]')\,,
\\[1mm]
V_{2b}(t) &=A\int_0^t e^{A(t-s)}AN\sigma(w(s))\cdot \nu\,ds
\\[1mm]
&=A^{1/4+\delta_1}\big(A\int_0^t e^{A(t-s)}\big[A^{3/4-\delta_1}N\big]\sigma(w(s))\cdot \nu\,ds\Big)
\in L^2(0,T;[\cD(A^{1/4+\delta_1})]')\,.
\end{split}
\end{equation*}
To infer the latter membership we simply used the first (basic) analytic estimate
in \eqref{e:analytic-estimates}.
Thus, the regularity of $V_{2a}$ and $V_{2b}$ combine to bring about
\begin{equation}\label{for-interpol_1}
y_0\in Y \Longrightarrow V_{2}\in L^2(0,T;[H^{1/2+2\delta_1}(\Omega_f)]')\,, 
\quad 0<\delta_1<\frac{1}{2}\,,
\end{equation}
where we used the identification $H^s(\Omega_f)\equiv \cD(A^{s/2})$ valid for $s<3/2$.
(We note that consistently with the improved regularity of the normal component
of the elastic tress tensor ({\em cf.}~Proposition~\ref{p:damped-traces}), 
\eqref{for-interpol_1} is a better regularity result than (3.18) in 
\cite{bucci-las-fsi_2010}.) 

\smallskip
\noindent
When $y_0\in \cD(\cA)$, the summand $V_2(t)$ is more conveniently rewritten in a different fashion:
\begin{equation*}
V_2(t)=V_{2a}(t)+V_{2b}(t)
=\underbrace{e^{A t}AN \sigma(w(0))\cdot \nu}_{V_{21}(t)}
+\underbrace{\int_0^t e^{A (t-s)}AN\sigma(w_t(s))\cdot \nu\,ds}_{V_{22}(t)}\,,
\end{equation*}
where by Proposition~\ref{p:damped-traces} and specifically in view of \eqref{e:improved-reg-stress} we know that
  
\begin{equation} \label{e:due-utili}
\sigma(w(0))\cdot \nu\in H^{1/2}(\Gamma_s)\,, \quad
\sigma(w_t)\cdot \nu\in L^2(0,T;L^2(\Gamma_s))\,.
\end{equation}
The regularity result on the right of \eqref{e:due-utili} suggests that we
rewrite the summand $V_{22}$ as follows:
\begin{equation*}
V_{22}(t)=A^{1/4+\delta_2}\int_0^t e^{(t-s)A}
\big[A^{3/4-\delta_2}N\big]\sigma(w_t(s))\cdot \nu\,ds
\end{equation*}
(valid for arbitrary $\delta_2\in (0,3/4)$).
Then, the first analytic estimate in \eqref{e:analytic-estimates} establishes
\begin{equation*}
A^{3/4-\delta_2}V_{22}\in L^2(0,T;L^2(\Omega_f))\,, 
\end{equation*}
that is
\begin{equation}\label{e:v_22}
V_{22}\in L^2(0,T;\cD(A^{3/4-\delta_2}))\equiv L^2(0,T;H^{3/2-2\delta_2}(\Omega_f))\,. 
\end{equation}

\smallskip
As for $V_{21}$, the regularity result on the left of \eqref{e:due-utili}
reveals that 
\begin{equation*}
V_{21}(t)=A^{\delta_2/2}\,e^{At}\chi\,, \quad
\chi:=A^{1/4}\big[A^{3/4-\delta_2/2}N\big]\sigma(w(0))\cdot \nu\in L^2(\Omega_f)\,.
\end{equation*}
which enables us to ascertain that
\begin{equation*}
A^{3/4-\delta_2}V_{21}=A^{3/4-\delta_2/2}e^{At}\chi\in L^{q_2}(0,T;L^2(\Omega_f)) 
\quad \text{provided $q_2(3/4-\delta_2/2)<1$;}
\end{equation*}
equivalently,
\begin{equation}\label{e:v_21}
V_{21}\in L^{q_2}(0,T;H^{3/2-2\delta_2}(\Omega_f))
\quad \text{for any $q_2<\frac{4}{3-2\delta_2}$.}
\end{equation}
We resume then \eqref{e:v_22} and observe that the upper bound $4/(3-2\delta_2)$
for the summability exponent $q_2$ is not greater than $2$ as long as
$\delta_2\le 1/2$; at the same time, $\delta_2$ must be taken arbitraily close to $0$ in order to attain the best space regularity.
Therefore, we limit $\delta_2\le 1/2$ and conclude from 
\eqref{e:v_21} and \eqref{e:v_22} (and still with $y_0\in \cD(\cA)$) that
\begin{equation}\label{e:for-interpol_2}
y_0\in \cD(\cA) \Longrightarrow 
V_2 
\in L^{q_2}(0,T;H^{3/2-2\delta_2}(\Omega_f))
\quad \text{for any $q_2<\frac{4}{3-2\delta_2}$}
\end{equation}
holds true, for any given $\delta_2\le 1/2$.
 
A fundamental tool is now interpolation.
On the basis of \eqref{for-interpol_1} and \eqref{e:for-interpol_2} valid for 
$y_0\in Y$ and $y_0\in \cD(\cA)$, respectively, we utilize 
\cite[Theorem~12.5]{lions-mag_72} to establish
\begin{equation*}
y_0\in \cD({\cA}^{1-\theta}) \Longrightarrow 
V_2\in L^{q_2}(0,T;W) \qquad \text{for any $q_2<\frac{4}{3-2\delta_2}$,}
\end{equation*}
where $W$ is the Sobolev space
\begin{equation*}
W= \big(H^{3/2-2\delta_2}(\Omega_f),[H^{1/2+2\delta_1}(\Omega_f)]'\big)_{1-\theta,2}
=H^s(\Omega_f)\,,
\end{equation*}
with $s=(3/2-2\delta_2)(1-\theta)-\theta(1/2+2\delta_1)$,
which simplifies to 
$s=3/2-2(\theta+\delta)$ by setting $\delta=\delta_1=\delta_2$.
By standard trace theory, we conclude that 
\begin{equation} \label{e:interpol_1}
y_0\in \cD(\cA^{1-\theta}) \Longrightarrow 
V_2\big|_{\Gamma_s}\in L^{q_2}(0,T;H^{1-2\theta-2\delta}(\Gamma_s))\,, 
\quad \text{for any $q_2<\frac{4}{3-2\delta}$,}
\end{equation}
provided $s\ge 1/2$, which means 
\begin{equation}\label{e:constraint_1}
\delta+\theta\le \frac{1}{2}\,.
\end{equation}
We note that \eqref{e:constraint_1} forces $\theta<1/2$ (as well as $\delta <1/2$).

\smallskip
The analysis of $V_1$ is simpler. 
Since $y_0\in \cD(\cA^{1-\theta})$, for the first component one has 
\begin{equation*}
u_0\in (H^1(\Omega_f),L^2(\Omega_f))_\theta=H^{1-\theta}(\Omega_f)\equiv \cD(A^{(1-\theta)/2})\,,
\end{equation*}
where the identification is justified by the fact that $1-\theta<3/2$. 
We now proceed differently than in \cite[Proof of Theorem~2.10, (iii)]{bucci-las-fsi_2010}.
Aiming to pinpoint a better (space) regularity of $V_1$, we rewrite
$V_1(t)=Ae^{At}u_0=A^{(1+\theta)/2}e^{At}[A^{(1-\theta)/2}u_0]$ to find 
\begin{equation*}
A^{(1-\theta)/2-\sigma}V_1(t)=A^{1-\sigma}e^{At}[A^{(1-\theta)/2}u_0]\,,
\end{equation*}
which shows
\begin{equation}\label{e:before-trace}
y_0\in \cD(\cA^{1-\theta}) \Longrightarrow
V_1\in L^{q_1}(0,T;\cD(A^{(1-\theta)/2-2\sigma}))
=L^{q_1}(0,T;H^{1-\theta-2\sigma}(\Omega_f)) 
\end{equation}
for any $q_1<\frac{1}{1-\sigma}$.
Only subsequently we invoke trace theory: if $1-\theta-2\sigma\ge 1/2$, i.e.
\begin{equation}\label{e:constraint_2}
2\sigma+\theta\le \frac{1}{2}\,,
\end{equation}
the interior regularity \eqref{e:before-trace} implies
\begin{equation}\label{e:interpol_2}
y_0\in \cD(\cA^{1-\theta}) \Longrightarrow
V_1\big|_{\Gamma_s}\in L^{q_1}(0,T;H^{1/2-\theta-2\sigma}(\Gamma_s)) 
\quad \text{for any $q_1<\frac{1}{1-\sigma}$.}
\end{equation}
Since the role of $\delta$ and $\sigma$ is similar, and consistently with the constraints \eqref{e:constraint_1} and \eqref{e:constraint_2}, we set $\sigma=\delta/2$ in \eqref{e:interpol_2},
which becomes 
\begin{equation}\label{e:interpol_2bis}
y_0\in \cD(\cA^{1-\theta}) \Longrightarrow
V_1\big|_{\Gamma_s}\in L^{q_1}(0,T;H^{1/2-\theta-\delta}(\Gamma_s)) 
\quad \text{for any $q_1<\frac{2}{2-\delta}$.}
\end{equation}

A comparison between the Sobolev exponents in \eqref{e:interpol_2bis} and \eqref{e:interpol_1}
shows $$\frac{1}{2}-\theta-\delta\le 1-2\theta-2\delta$$ owing to \eqref{e:constraint_1};
in addition, readily $\frac{2}{2-\delta}<\frac{4}{3-2\delta}$. 
Therefore, we conclude that
\begin{equation*}
y_0\in \cD(\cA^{1-\theta}) \Longrightarrow V\big|_{\Gamma_s} \in L^q(0,T;H^{1/2-\theta-\delta}(\Gamma_s)) 
\quad \text{for any $q< \frac{2}{2-\delta}$,}
\end{equation*}
which is nothing but \eqref{e:u_t-better-reg}.
Finally, by taking $\delta=1/2-\theta$ we see that \eqref{e:u_t-better-reg}
yields \eqref{e:u_t-reg}, which ends the proof.
\qed


\section{Application to optimal boundary control} \label{s:application}
Regularity estimates are a critical part of most results in control theory for PDE.
In this section we show how the trace regularity results provided by Theorem~\ref{t:main} combine to bring about the well-posedness of both differential and algebraic Riccati equations (DRE and ARE,
respectively) corresponding to the quadratic optimal control problems on a finite and infinite time horizon
associated with the PDE problem \eqref{e:dampedfsi-controlled} 
(including, in particular, problem \eqref{e:dampedfsi-controlled}-\eqref{e:energy-cost} with $0<T\le +\infty$), thus confirming their {\em full} solvability.
We will ascertain that the Assumptions~1.1 and 1.4 in \cite{abl_2013} -- pertaining to the (dynamics, control, observation) operators involved -- are fulfilled.
An account of this technical issue, which will be kept brief, is given in Subsection~\ref{ss:conclusion}, thereby concluding the article.
Rather, we find it worth giving insight into the prerequisite role of certain trace regularity results within
the LQ-problem for hyperbolic PDE subject to boundary control, as well as for composite systems of hyperbolic-parabolic PDE (that is the case of the present work), along with some historical comments.
This is the content of the next subsection.

\subsection{The broader context}
Let $Y$ and $U$ be two separable Hilbert spaces, the {\em state} and {\em control} space, respectively.
Consider a linear control system such as the one in \eqref{e:cauchy-pbm}, under the following basic assumptions.

\begin{itemize}
\item
The closed linear operator $\cA\colon \cD(\cA)\subset Y \to Y$ is the infinitesimal generator of a strongly continuous semigroup $e^{t\cA}$ on $Y$, $t\ge 0$;
\item 
$\cB\in \cL(U,[\cD({\cA}^*)]')$.
\end{itemize}
Thus, given $y_0\in Y$, the Cauchy problem \eqref{e:cauchy-pbm} possesses a unique {\em mild} solution
given by 
\begin{equation} \label{e:mild}
y(t)= e^{t\cA}y_0+\int_0^t e^{(t-s)\cA}\cB g(s)\,ds\,,
\end{equation}
initially understood in the extrapolation space $[\cD({\cA}^*)]'$; 
see \cite[\S0.~3, p.~6, and Remark~7.1.2, p.~646]{las-trig-redbooks}.
To fully understand the regularity of the state \eqref{e:mild} one needs to pinpoint the one of the operator
\begin{equation*}
L\colon g(\cdot)\longrightarrow (Lg)(t):=\int_0^t e^{(t-s)\cA}\cB g(s)\,ds\,,
\end{equation*}
that is the (so called ``input-to-state'') mapping which associates to any control function
$g\in L^2(0,T;U)$ the solution to the Cauchy problem \eqref{e:cauchy-pbm} with $y_0=0$.
We note that the regularity properties of $L$ and its adjoint $L^*$ are, in turn, fully determined by the regularity (in time and space) of the kernel $e^{t\cA}\cB$ and its adjoint.
Pinpointing it may be a challenging task in the case of interest, where $\cB\notin \cL(U,Y)$. 
It is here that the study of the parabolic and the hyperbolic cases diverge, and where a glimpse
of the possibility for systems of coupled parabolic-hyperbolic PDE emerges. 

The property of being {\em analytic} possessed by the semigroup $e^{t\cA}$, combined with the 
structure of the operator $\cB$ (stemming from the modeling of boundary control forces)
results in $Lg\in L^2(0,T;Y)$ via the first one of \eqref{e:analytic-estimates},
thereby obtaining $y\in L^2(0,T;Y)$ for {\em any} state -- while $y\in C([0,T],Y)$ might be false, 
depending on the specific BC in place.
Furthermore, the very same property gives rise to well-posed Riccati equations with bounded gains.
This scenario pertains to parabolic (and parabolic-like) PDE; see \cite{las-trig-lncis} and 
\cite{bensoussan-etal}.
Instead, the fact that 
\begin{enumerate}
\item[i)]
the semigroup $e^{t\cA}$ that drives the free dynamics is {\em not} analytic, while
\item[ii)]
the control operator $\cB$ is still {\em unbounded} (it is necessarily so because of the control action exerted from the boundary),
\end{enumerate}
renders the hyperbolic case trickier.

To the state equation we associate the quadratic functional
\begin{equation} \label{e:cost}
J(u)=\int_0^T \left(\|\cR y(t)\|_Z^2 + \|g(t)\|_U^2\right)dt\,, 
\end{equation}
where $Z$ is a third separable Hilbert space -- the so called observation space (possibly,
$Z\equiv Y$).
A priori, the {\em observation} operator $\cR$ simply satisfies 
\begin{equation}\label{e:basic-for-r}
\cR\in \cL(Y,Z)\,.
\end{equation}
The formulation of the optimal control problem under study is the usual one.
\begin{problem}[\bf The optimal control problem] \label{p:lq-pbm}
Given $y_0\in Y$, seek a control function $\hat{g}$ which minimizes the
cost functional \eqref{e:cost} overall $g\in L^2(0,T;U)$, where $y(\cdot)=y(\cdot\,;y_0,g)$
is the solution to \eqref{e:cauchy-pbm} corresponding to $g(\cdot)$.
\end{problem}
We note however that by ``solving'' Problem~\ref{p:lq-pbm} it is meant that certain
principal facts should hold, besides the existence of a unique optimal pair 
$(\hat{g}(\cdot,s;y_0),\hat{y}(\cdot,s;y_0))$,
a property which simply follows via classical variational arguments. 
Namely, 
\begin{itemize}
\item[--]
that the optimal control $\hat{g}(t)$ admits a (pointwise in time) {\em feedback} representation,
in terms of the optimal state $\hat{y}(t)$;
\item[--]
that the optimal cost operator $P$ ($P(t)$, when $T<+\infty$) solves the corresponding 
algebraic (differential, respectively) Riccati equation, that is
\begin{equation} \label{e:are}
\begin{split}
& (Px,\cA z)_Y+(\cA x,Pz)_Y-(\cB^*Px,\cB^*Pz)_U+(Rx,Rz)_Z=0  
\\[1mm]
& \myspace\myspace \textrm{for any $x,z\in \cD(\cA)$;}
\end{split}
\end{equation}
thus, the issue of well-posedness of the ARE (DRE) arises, requiring  
\item[--]
that a meaning is given to the gain operator ${\cB}^*P$ (${\cB}^*P(t)$) either on the state space $Y$
(possibly by means of extensions), or -- which will be the case, here -- as a {\em bounded} operator
on a suitable dense subset of $Y$.
\end{itemize}
It is by now well known that the latter property might be lacking, in view of the aforesaid (intrinsic) 
features of the control system under examination. 
Now, the key role played by the kernel $e^{t\cA}\cB$ 
(or, possibly -- which is weaker --, by $Re^{t\cA}\cB$)
becomes even more evident upon recalling that the optimal cost operator $P$ is explicitly defined in terms of the optimal state $\hat{y}(t)=\Phi(t)y_0$, $0\le t<+\infty$, via the formula
\begin{equation*}
Py_0 := \int_0^\infty e^{t{\cA}^*}R^*R \Phi(t)y_0\, dt \qquad x\in Y\,.
\end{equation*}

It dates back to the beginning of the eighties the discovery that in the case of linear hyperbolic equations -- with the wave equation as a paradigm -- the following ({\em admissibility}) estimate
\begin{equation}\label{e:admissibility}
\exists C_T>0 \colon \quad \int_0^T \|{\cB}^*e^{t{\cA}^*}y_0\|_U^2\, dt\le C_T\|y_0\|_Y^2 \qquad \forall y_0\in Y\,,
\end{equation}
is the one to be ascertained.
Indeed, it implies $L\in \cL(L^2(0,T;U),C([0,T],Y))$, so that any state belongs to $C([0,T],Y)$.
In addition, it enables to show a well-posedness result for the corresponding DRE, provided $\cR$ is smoothing in an appropriate sense.
The PDE interpretation of the abstract condition \eqref{e:admissibility} is intriguing:
if the Cauchy problem \eqref{e:cauchy-pbm} represents the Cauchy-Dirichlet problem
\begin{equation}\label{e:ibvp-wave}
\begin{cases}
w_{tt} =\Delta w
& \text{in $Q:=(0,T)\times \Omega$}
\\[1mm]
w(0,\cdot)=w_0\,,\; w_t(0,\cdot)=w_1
& \text{in $\Omega$}
\\[1mm]
w(t,x) =g & \text{on $\Sigma:=(0,T)\times \Gamma$} 
\end{cases}
\end{equation}
for a single, linear wave equation 
(where $y(t)=(w(t,\cdot),w_t(t,\cdot))$, $Y=L^2(\Omega) \times H^{-1}(\Omega)$ and $U=L^2(\Gamma)$), then the abstract condition
\eqref{e:admissibility} is equivalent to the trace regularity result
\begin{equation}
\exists C_T>0 \colon \quad \int_0^T \int_{\Gamma}\Big|\frac{\partial w}{\partial \nu}(t,x)\Big|^2\, d\sigma\,dt
\le C_T \int_{\Omega}\big(|\nabla w_0|^2+|w_1|^2\big)\,dx\,,
\end{equation}
valid for the solution to the IBVP \eqref{e:ibvp-wave} with initial data 
$(w_0,w_1)\in H^1_0(\Omega)\times L^2(\Omega)$ and (trivial) boundary datum $g\equiv 0$
({\em cf.} \cite{las-lions-trig_1986}).
The proof of the boundary regularity results more often rely on energy methods, along with an appropriate choice of multipliers which are suited to the specific PDE problem, or they may require pseudodifferential methods.
For a concise introduction (yet pretty comprehensive one, till the eighites) to the LQ-problem for either parabolic (and parabolic-like) or hyperbolic equations with boundary or point control, the reader is 
referred to \cite{las-trig-lncis}. 
For a thorough treatise and bibliography, see \cite{bensoussan-etal} and \cite{las-trig-redbooks}.


\subsubsection{Composite systems of PDE}
By the end of the nineties a theory of the LQ-problem 
ensuring a bounded gain operator and well-posed Riccati equations -- and hence akin to the one pertaining to parabolic-like PDE -- has been devised, for abstract control 
systems which yield the (so called) {\em singular estimate} 
\begin{equation} \label{e:se}
\|e^{t\cA}\cB\|_{\cL(U,Y)}\sim \frac{C}{t^{\gamma}}
\end{equation}
in a right neighbourhood of $t=0$ (for some $\gamma\in (0,1)$, and $C>0$),
even in the absence of analyticity of the semigroup $e^{t\cA}$;
see \cite{las-trento}, and \cite{las-tuff-se-revisited_2009} for the Bolza problem.
A breakthrough in this direction came from the study \cite{avalos-las_1996} of an optimal control problem for an acoustic-structure interaction PDE system.
 
The project carried out and accomplished in the studies \cite{abl_2005,abl_2013} (with a few additions and refinements that are forthcoming) provides a framework for optimal control problems with quadratic functionals for a wider class of PDE systems which comprise both hyperbolic and parabolic components, with the latter subjected to boundary/interface control.
This class has proven to be sufficiently general to encompass 
widely different PDE models such as certain
thermoelastic systems, acoustic-structure and fluid-structure interactions
({\em cf.}~\cite{abl-thermo_2005}, \cite{bucci-applicationes_2008}, 
\cite{bucci-las-fsi_2010,bucci-las-fsi_2011}).
The achievements of \cite{abl_2005,abl_2013} include
\begin{itemize}
\item
the feedback synthesis of the optimal control, along with

\item
well-posed (differential, first, and algebraic, next) Riccati equations, 
\end{itemize} 
with a {\em gain} operator which is bounded on a dense subset of the state space. 
This theory allows non-smoothing observations $\cR$.
The aforesaid conclusions, whose relevance is well-understood within systems theory,
raise highly technical issues in the context of composite PDE systems,
both on a functional-analytic level (see in particular \cite[Section~5]{abl_2013})
and on the `PDE level', when it comes to the verification of the control-theoretic
properties that characterize the couple $(A,B)$ (these are Assumptions~1.4 in \cite{abl_2013}). 
When dealing with systems of coupled parabolic-hyperbolic PDE one needs to disclose
and exploit the regularity of the hyperbolic traces in order to eventually show the sought
regularity of the parabolic ones -- 
a more challenging task especially when the coupling occurrs either on a sub-boundary of the domain or
at an interface, like in the case of the FSI under consideration here.


\subsection{Optimal boundary control of the present FSI} 
\label{ss:conclusion}
For the linearization of the original FSI studied by Barbu~{\em et al.} (i.e. for \eqref{e:dampedfsi-free} with $a_2=a_1=0$), subject to a control force acting on the interface, the study of Lasiecka and Tuffaha \cite{las-tuff-fsi_2009} revealed that the sought estimate \eqref{e:se} does not hold, unless the observation
operator is suitably smoothing.
On the other hand, the weaker estimate
\begin{equation*}
\|\cO e^{t\cA}\cB\|_{\cL(U,Z)}\sim \frac{C}{t^{\gamma}}
\end{equation*}
is established (with suitable $\cO$ and $\gamma\in (0,1)$), which is nothing but a singular estimate in a weaker topology than the one of the energy space, and which introduces a limitation on the cost functionals allowed. 
But then the ``revisited singular estimate'' theory in \cite{las-tuff-se-revisited_2009} applies,
with the plus of the Bolza problem being feasible. 
(We used above the usual notation $e^{t\cA}$ and $\cB$ for the semigroup that describes the overall free dynamics and the control operator.)

As explained in the Introduction, with the focus on the optimal control problem on a finite time horizon
for the very same linear FSI, the analysis pursued in \cite{bucci-las-fsi_2010,bucci-las-fsi_2011} succeded in removing the constraints on the cost functionals, by showing that the FSI falls in the more general class and set-up devised in \cite{abl_2005}.
The present work continues and completes the PDE analysis of \cite{bucci-las-fsi_2010},
by exploring the infinite (besides finite) time horizon case.
Given the stability properties of the uncontrolled FSI, we were necessarily led to study a variant of
the original linearization, that is \eqref{e:dampedfsi-free} with $a_2>0$ (and $a_1>0$, for the sake of simplicity).

\begin{remark} \label{r:pde-theoretic}
\begin{rm}
All the boundary regularity results contained in Theorem~\ref{t:main} can be interpreted as respective 
regularity properties of the operators ${\cB}^*e^{t{\cA}^*}$ or ${\cB}^*e^{t{\cA}^*}{{\cA}^*}^\epsilon$,
showing that the requirements of \cite[Assumptions~1.1 and 1.4]{abl_2013} are met.
This is accomplished making use of pretty much the same arguments that were used in the end of \cite[Section~4]{bucci-las-fsi_2010} for the undamped FSI.
Once again, the observation operator $\cR$ is allowed to be the identity, just like in the undamped case.
Thus in particular, the cost functional \eqref{e:energy-cost} 
(that is integral of the full quadratic energy of the system)
fits into the picture.
The major steps of the needed verification are briefly illustrated below, thereby concluding the article.
\end{rm}
\end{remark}

\noindent
{\em Validity of the Assumptions~1.1 and 1.4 in \cite{abl_2013}, as a consequence of Theorem~\ref{t:main}.}
Let us start from the basic Assumptions~1.1.
Just observe that these are found among the statements of Proposition~\ref{p:fsi-well-posed},
with the property of exponential stability of the semigroup $e^{t\cA}$ proved in \cite{avalos-trig-dampedfsi_2009}.
To check the Assumptions~1.4, the {\em incipit} is as follows: in view of \eqref{e:operator-b} and
\cite[Proposition~4.3]{las-tuff-fsi_2009} 
\begin{equation*}
\cB^*e^{t\cA^*}y_0=-N^*A\hat{u}(t)=\hat{u}(t)\big|_{\Gamma_s}\,,
\end{equation*}
having denoted by $\hat{u}(t)$ the first component of the solution 
$\hat{y}(t)=(\hat{u}(t),\hat{w}(t)\hat{w}_t(t)$ to the {\em uncontrolled} Cauchy problem
\begin{equation*}
\begin{cases}
\hat{y}'=\cA^* \hat{y}
\\[1mm]
\hat{y}(0)=y_0\,.
\end{cases}
\end{equation*}
Thus, the sought decomposition of the operator $\cB^*e^{t\cA^*}$, along with the regularity estimates
in Assumption~1.4(i)-(ii) in \cite{abl_2013} are confirmed by \eqref{e:se-damped} (with $\gamma=1/4+\delta$) and \eqref{e:u_2-reg} of Theorem~\ref{t:main}, respectively. 

Next, \eqref{e:u_2-suffices} establishes Assumption~1.4(iiia), initially with
arbitrary $\epsilon\in (0,1)$.
By following the argument and computations in the very end of \cite[Section~4]{bucci-las-fsi_2010},
and given the obtained regularity \eqref{e:u_t-reg} for the fluid acceleration, we see that Assumption~1.4(iiic) is satisfied, with $\epsilon=\theta<1/2$, provided 
\begin{equation} \label{e:maintains} 
\cR^*\cR \in \cL(\cD(\cA^\epsilon),\cD({\cA^*}^\epsilon))\,.
\end{equation}
We note that the latter is not a requirement that the observation operator should possess an appropriate smoothing property; in particular, \eqref{e:maintains} holds true when $\cR\equiv I$.
As already explained in \cite[Remark~2.8]{bucci-las-fsi_2010}, this is because 
$\cD(\cA^\epsilon)\equiv \cD({\cA^*}^\epsilon)$ for $\epsilon>0$ sufficiently small;
and in fact, in view of Theorem~\ref{t:main}, $\epsilon$ can be taken {\em arbitrarily} small.
(Recall that the equivalence between $\cD(\cA^\epsilon)$ and $\cD({\cA^*}^\epsilon)$ for $\epsilon>0$ sufficiently small is a consequence of $\cD(\cA)=\cD({\cA^*})$, as the domains of fractional powers of the generator $\cA$ ($\cA^*$) are intermediate spaces between $\cD(\cA)$ ($\cD({\cA^*}^\epsilon)$) and $Y$.) 

In view of the above, all the statements of \cite[Theorem~1.5]{abl_2013} -- pertaining to the infinite time horizon case -- are valid. 
In addition, since the said Assumptions~1.1 and 1.4 in \cite{abl_2013} contain the ones needed to solve the optimal control problem on a finite time interval, that are Hypotesis~2.1 and 2.2. in \cite{abl_2005},
{\em a fortiori} \cite[Theorem~2.3]{abl_2005} applies as well.
Both differential and algebraic Riccati equations are solvable, and the synthesis of the (unique)
optimal control is guaranteed.
 


\end{document}